\documentclass[a4paper,12pt,oneside,reqno]{amsart}

\usepackage{amsfonts}
\usepackage{amsmath}   
\usepackage{amssymb}
\usepackage{amsthm}
\usepackage{eucal}

\newenvironment{spmatrix}
{\footnotesize\begin{pmatrix}}
	{\end{pmatrix}}

\usepackage{hyperref}


\allowdisplaybreaks[4]

\newcommand{\half}{{\frac{1}{2}}}

\theoremstyle{plain}
\newtheorem{theorem}{Theorem}
\newtheorem{proposition}[theorem]{Proposition}
\newtheorem{lemma}[theorem]{Lemma}
\newtheorem{corollary}[theorem]{Corollary}

\theoremstyle{definition}

\newtheorem{remark}[theorem]{Remark}


\newcommand{\rr}{{\mathbb R}}
\newcommand{\RR}{{\mathbb R}}
\newcommand{\cc}{{\mathbb C}}
\newcommand{\CC}{{\mathbb C}}
\newcommand{\nn}{{\mathbb N}}
\newcommand{\NN}{{\mathbb N}}

\newcommand{\cD}{{\mathcal D}}
\newcommand{\cS}{{\mathcal S}}
\newcommand{\cN}{{\mathcal N}}

\DeclareMathOperator{\dom}{dom}
\DeclareMathOperator{\spec}{spec}
\DeclareMathOperator{\supp}{supp}
\DeclareMathOperator{\ran}{ran}

\newcommand{\dd}{\,\mathrm{d}}

\renewcommand{\Im}{\operatorname{Im}}

\newcommand{\specd}{\spec_\mathrm{disc}}
\newcommand{\spece}{\spec_\mathrm{ess}}
\newcommand{\specp}{\spec_\mathrm{p}}

\renewcommand{\S}{\Sigma}

\newcommand{\ddd}{\,\mathrm{d}}

\newcommand{\pz}{\partial_{\bar{z}}}
\newcommand{\pzb}{\partial_z}

\begin{document}
	\title[]{On Schr\"odinger operators with oblique transmission conditions on non-smooth curves}

	\author[B. Benhellal]{Badreddine Benhellal}
	\address{Carl von Ossietzky Universit\"at Oldenburg, Fakult\"at V -- Mathematik und Naturwissenschaften, Institut f\"ur Mathematik, Ammer\-l\"an\-der Heerstr. 114--118, 26129 Oldenburg, Germany}
	\email{badreddine.benhellal@uol.de}
	\author[M.Camarasa]{Miguel Camarasa}
	\address{BCAM - Basque Center for Applied Mathematics, Alameda de Maza\-rredo 14, 48009 Bilbao, Spain}
	\email{mcamarasa@bcamath.org}
	\author[K. Pankrashkin]{Konstantin Pankrashkin}
	\address{Carl von Ossietzky Universit\"at Oldenburg, Fakult\"at V -- Mathematik und Naturwissenschaften, Institut f\"ur Mathematik, Ammer\-l\"an\-der Heerstr. 114--118, 26129 Oldenburg, Germany}	\email{konstantin.pankrashkin@uol.de}

	%
	%
	%
	%
	%
	%
	
	\begin{abstract}
		In a recent paper Behrndt, Holzmann, and Stenzel introduced a new class of two-dimensional Schr\"odinger operators with oblique transmissions along smooth curves. We extend most components of this analysis to the case of Lipschitz curves.
	\end{abstract}
	
	\maketitle

	\section{Introduction}
	
	The recent paper \cite{bhs} by Behrndt, Holzmann, and Stenzel introduced a new class of two-dimensional Schr\"odinger operators with interactions supported by curves. The case of smooth curves was studied in detail, in particular, it was shown that these operators arise as a kind of non-relativistic limit of Dirac operators, and some results on the dependence of the eigenvalues on the coupling constant were obtained. In the present work, we show that a significant part of the qualitative analysis can be extended to Lipschitz curves as well, but, at the same time,
	the non-smoothness may lead to an asymptotic behavior of eigenvalues that is different from that obtained in \cite{bhs}.
	
	Let $\Omega_+\subset \RR^2$ be a bounded simply connected domain with Lipschitz boundary $\Sigma$.  Set $\Omega_-:=\mathbb{R}^2\setminus \overline{\Omega_+}$ and denote by $N=(n_1,n_2)$ the unit normal on $\Sigma$ pointing to $\Omega_-$. It will be convenient to use its complexification
	\[
	n:=n_1+in_2:\Sigma\to\CC.
	\]
	A function $f\in L^2(\rr^2)$ will be identified with the pair $(f_+,f_-)$, where $f_{\pm}$ is the restriction of $f$ on $\Omega_\pm$. This gives rise to the identifications \[
	L^2(\rr^2)\simeq L^2(\Omega_+)\oplus L^2(\Omega_-),
	\quad
	H^s(\RR^2\setminus\Sigma)\simeq H^s(\Omega_+)\oplus H^s(\Omega_-), \quad s\ge 0,
	\]
	where $H^s$ stands for the Sobolev space of order $s$, and similar notations and identifications will be used for vector-values functions as well.
	We further denote by $\gamma^\pm$
	the Dirichlet trace operator from $\Omega_\pm$ on $\Sigma$, i.e.
	for functions $f_\pm\in C^\infty(\overline{\Omega_\pm})$ with compact support one has
	$\gamma^\pm f_\pm:=f_\pm|_{\Sigma}$,
	which extends by density to a bounded linear map between suitable Sobolev-type spaces on $\Omega_\pm$ and $\Sigma$ (as discussed in greater detail below).
	
	We are interested in the Laplacian on $\RR^2$ with the so-called \emph{oblique transmission condition} $P_\alpha f=0$ on $\Sigma$, where
	\begin{equation}\label{bc}
		P_\alpha f:=n(\gamma^+f_+-\gamma^-f_- ) +\alpha(\gamma^+\pz f_+ +\gamma^-\pz f_-),
	\end{equation}
	the expression
	\[
	\pz:=\frac{1}{2}(\partial_1+i\partial_2)
	\]
	is known as the Wirtinger derivative and $\alpha\in\RR$ is a parameter. Our goal is to construct a self-adjoint realization of the above operator and to understand its spectral properties and the dependence on $\alpha$.
	
	
	The paper \cite{bhs} was dedicated to the case when
	\begin{equation}
		\label{eqsmooth}
		\Sigma \text{ is $C^\infty$-smooth,}
	\end{equation}
	so let us review the available results first. Assume \eqref{eqsmooth}, then
	\[
	\gamma^\pm:H^1(\Omega_\pm)\to H^\half(\Sigma)
	\]
	are bounded linear maps. Denote by $\Hat H_\alpha$ the linear operator in $L^2(\RR^n)$ acting as
	\[
	\Hat H_\alpha (f_+,f_-)=(-\Delta f_+,-\Delta f_-)
	\]
	on the domain
	\begin{align*}
		\dom \Hat H_\alpha:=\big\{&
		(f_+,f_-)\in H^1(\Omega_+)\oplus H^1(\Omega_-):\\
		&\ (\pz f_+,\pz f_-)\in H^1(\RR^2),
		\  P_\alpha f=0
		\big\},
	\end{align*}	
	then from  \cite[Theorem 1.1]{bhs} it is known that $\Hat H_\alpha$ is self-adjoint
	with $\spece \Hat H_\alpha=[0,\infty)$. The discrete spectrum is empty for $\alpha\ge 0$ and infinite and unbounded from below for $\alpha<0$, without accumulation at $0$, and for any fixed $n\in\NN$ the $n$-th discrete eigenvalue $\lambda_n(\Hat H_\alpha)$ (if counted with multiplicities in the non-increasing order) satisfies
	\begin{equation}
		\label{enta}
		\lambda_n(\Hat H_\alpha)=-\dfrac{4}{\alpha^2}+O(1) \text{ for } \alpha\to 0^-.
	\end{equation}
	In addition, the operator $\Hat H_\alpha$ can be obtained as a limit of Dirac operators. Namely, for $c,\eta,\tau\in\RR$, $c\ne 0$, denote by
	$\Hat B_{c,\eta,\tau}$ the operator in $L^2(\RR^2,\CC^2)$ acting as
	\begin{align*}
		\Hat B_{c,\eta,\tau} (f_+,f_-)&:=(D_cf_+,D_cf_-),\\
		D_c&:=-ic\begin{pmatrix}
			0 & \partial_1-i\partial_2\\
			\partial_1+i\partial_2 & 0
		\end{pmatrix}
		+\dfrac{c^2}{2}\begin{pmatrix}
			1 & 0\\
			0 & -1
		\end{pmatrix},
	\end{align*}
	on the domain
	\begin{align*}
		&\dom \Hat B_{c,\eta,\tau}:=\bigg\{(f_+,f_-)\in H^1(\Omega_+,\mathbb{C}^2)\oplus H^1(\Omega_-,\mathbb{C}^2):\\
		&\quad ic\begin{spmatrix} 0 &\Bar n\\
			n & 0
		\end{spmatrix}(\gamma^+f_+-\gamma^- f_-)+\frac{1}{2}\begin{spmatrix}
			\eta +\tau & 0 \\
			0 & \eta-\tau
		\end{spmatrix}(\gamma^+f_+ +\gamma^- f_-)=0
		\bigg\},
	\end{align*}
	then \cite[Theorem 1.2]{bhs} states that for any fixed $\alpha\in\RR$ and $\lambda\in\CC\setminus\RR$ one has the norm resolvent convergence
	\[
	\left\|\Big(\Hat B_{c,-\frac{\alpha c^2}{2},\frac{\alpha c^2}{2}}-\big(\lambda+\tfrac{c^2}{2}\big)\Big)^{-1}-\begin{pmatrix}
		(\Hat H_\alpha-\lambda)^{-1} & 0 \\
		0 & 0
	\end{pmatrix}\right\|=O\Big(\frac{1}{c}\Big), \ c\to \infty.
	\]
	Our objective is to obtain similar results when the curve $\Sigma$ is non-smooth. We will show that the basic properties
	as the self-adjointness and the resolvent convergence can be adapted by a suitable extension of operator domains. On the other hand,
	the asymptotics \eqref{enta} turns out to be false in general.

	{}From now we assume:
	\begin{equation}
		\label{eqlip}
		\text{the curve $\Sigma$ is Lipschitz.}
	\end{equation}
	For $s\ge 0$ and open $\Omega\subset\RR^2$ define the space
	\[
	H^{s}_{\Delta}(\Omega)=\big\{g\in H^{s}(\Omega):\, \Delta g\in L^2(\Omega)\big\},
	\]
	which is a Hilbert space with the scalar product
	\[
	\langle g,\Tilde g \rangle_{H^{s}_{\Delta}(\Omega)}:=\langle g,\Tilde g\rangle_{H^s(\Omega)}+ \langle \Delta g,\Delta \Tilde g\rangle_{L^2(\Omega)}.
	\]
	Recall that the Dirichlet trace maps $\gamma^\pm$ can be viewed as bounded linear maps from $H^s_{\Delta}(\Omega_{\pm})$ to $H^{s-\half}(\Sigma)$ for any $s\in[\frac{1}{2},\frac{3}{2}]$, see \cite[Corollary 3.7]{bgm}.
	
	For $\alpha\in\RR$ denote by $H_\alpha$ the linear operator in $L^2(\RR^2)$ acting as
	\[
	H_\alpha (f_+,f_-)= (-\Delta f_+,\,-\Delta f_-)
	\]
	on the domain 
	\begin{equation}
		\label{def:schrodinger}
		\begin{aligned}
			\dom H_\alpha:=\big\{&
			(f_+,f_-)\in H^\half_{\Delta}(\Omega_+)\oplus H^\half_{\Delta}(\Omega_-):\\
			&\ (\pz f_+,\pz f_-)\in H^1(\RR^2),
			\  P_\alpha f=0
			\big\}.
		\end{aligned}	
	\end{equation}
	In Section \ref{sec2} we show that $H_\alpha$ is self-adjoint, its essential spectrum is the positive half-line. The discrete spectrum is empty for $\alpha>0$ and infinite (with accumulation at $-\infty$ only) for $\alpha<0$. This part of the discussion represents an adaptation of the analysis of \cite{bhs} to the Lipschitz case with the help of recent developments from \cite{bgm,bpz}. In Section \ref{sec3} we discuss the behavior of the eigenvalues of $H_\alpha$ for $\alpha\to 0^-$. Similarly to \cite{bhs} we use a relation with $\delta$-potentials, but due to the lower regularity
	of $\Sigma$ only much weaker assumptions can be used. As a result, we show the two-sided estimate only: for any $n\in\NN$ there are $0<A<B$ such that
	\[
	-\dfrac{B}{\alpha^2}\le 
	\lambda_n(H_\alpha)\le-\dfrac{A}{\alpha^2} \text{ for }\alpha\to 0^-.
	\]
	
	The asymptotic spectral analysis of Schr\"odinger operators with $\delta$-potentials supported on non-smooth curves clearly has its own interest. Due to an expected large number of technicalities, we prefer to discuss it in detail somewhere else. In the present work, we restrict our attention to the first eigenvalue and a special geometry to show that the asymptotic behavior of eigenvalues can be different from the one in the smooth case. More precisely, we show in Section \ref{sec4} that for any $B\in(1,4)$ there exists a non-smooth curve $\Sigma$ such that for the associated operator $H_\alpha$
	there holds
	\[
	\lambda_1(H_\alpha)=-\dfrac{B}{\alpha^2}+o\Big(\dfrac{1}{\alpha^2}\Big) \text{ for } \alpha\to 0^-,
	\]
	which is clearly different from \eqref{enta}. The analysis in Sections \ref{sec3} and \ref{sec4} is based on the min-max principle
	for the eigenvalues.

	\section{Self-adjointness and basic spectral properties}\label{sec2}
	
	In addition to the Dirichlet trace maps $\gamma^\pm$ we will consider the Neumann trace maps $\gamma^\pm_n$ defined
	for $f_\pm\in C^\infty(\overline{\Omega_\pm})$ by $\gamma^\pm_n f_\pm:= \pm \langle N,f_\pm|_\Sigma\rangle$
	and then extended by continuity in suitable function spaces. In particular, $\gamma^\pm_n:H^s_\Delta(\Omega_\pm)\to H^{s-\frac{3}{2}}(\Sigma)$ are bounded
	for any $s\in[\frac{1}{2},\frac{3}{2}]$, see \cite[Corollary 5.7]{bgm}
	
	Recall that for any $\lambda\in\mathbb{C}\setminus[0,\infty)$ the function
	\[
	\Phi(x)=\frac{1}{2\pi}K_0(-i\sqrt{\lambda}|x|), \quad x\in\rr^2\setminus\{0\},
	\]
	is a fundamental solution of $-\Delta -\lambda$. Here $K_0$ is the modified Bessel function of the second kind and the convention $\Im\sqrt{\lambda}>0$ is used.
	
	For $\lambda\in\mathbb{C}\setminus[0,\infty)$ let $\cS(\lambda):L^2(\Sigma)\rightarrow L^{2}(\mathbb{R}^2)$ be the single layer potential given by
	\begin{equation}\label{singlelayer}
		\cS(\lambda)g(x)=\int_{\Sigma}\Phi(x-y)g(y)\dd\sigma(y), \ g\in L^2(\Sigma), \ x\in\mathbb{R}^2\setminus\Sigma,
	\end{equation}
	and $S(\lambda):\,L^2(\Sigma)\rightarrow L^2(\Sigma)$ be the single layer boundary integral operator given by
	\begin{equation}\label{singlelayer2}
		S(\lambda)g(x)=\int_{\Sigma}\Phi(x-y)g(y)\dd\sigma(y), \ g\in L^2(\Sigma), \ x\in\Sigma.
	\end{equation}
	One has by construction $(-\Delta-\lambda)\cS(\lambda)=0$ in $\rr^2\setminus\Sigma$, and it is well known that
	\[
	\cS(\lambda):L^2(\Sigma)\rightarrow H^{1}(\mathbb{R}^2),\quad S(\lambda):L^2(\Sigma)\rightarrow H^1(\Sigma),
	\]
	are well-defined and bounded, and there holds $\big(\cS(\lambda)g\big)_\pm\in H^{\frac{3}{2}}_\Delta(\Omega_\pm)$ and
	\begin{align}\label{SLprop}
		\gamma^\pm \cS(\lambda)g= S(\lambda)g \quad \text{and}\quad \gamma^+_n \cS(\lambda)g+\gamma^-_n \cS(\lambda)g=g,
	\end{align}
	for all $g\in L^2(\Sigma)$, see \cite[Theorem 1]{cost2} and \cite[Theorem 6.11 and Eq. (7.5)]{Mc}.
	An integration by parts gives	
	\begin{align*}
		\lambda \big\|\cS(\lambda)g\big\|^2_{L^2(\Omega_\pm)}&=\big\langle \cS(\lambda)g,-\Delta \cS(\lambda) g\big\rangle_{L^2(\Omega_\pm)}\\
		&=	\big\|\nabla \cS(\lambda) g \|^2_{L^2(\Omega_\pm)}
		- \big\langle \gamma^\pm \cS(\lambda) g, \gamma^\pm_n \cS(\lambda) g\big\rangle_{L^2(\S)},
	\end{align*} 
	and by \eqref{SLprop} it follows that
	\begin{equation}
		\label{ssll}
		\begin{aligned}
			\big\langle S(\lambda)g,g  \big\rangle_{L^2(\Sigma)} &
			=  \big\|\nabla \cS(\lambda) g \|^2_{L^2(\Omega_+)}+ \big\|\nabla \cS(\lambda) g \|^2_{L^2(\Omega_-)}\\
			&\quad - \lambda \big\|\cS(\lambda)g\big\|^2_{L^2(\RR^2)},\ g\in L^2(\Sigma).
		\end{aligned}
	\end{equation}	
	The operator $S(\lambda):L^2(\Sigma)\to L^2(\Sigma)$ is compact (as it has a Hilbert-Schmidt integral kernel), and for any $\lambda\in(-\infty,0)$ it is self-adjoint and non-negative. We further remark that if $S(\lambda)g=0$ for some $\lambda<0$ and $g\in L^2(\Sigma)$, then \eqref{ssll} shows
	that $\cS(\lambda)g=0$ in $\Omega_\pm$, and the second identity in \eqref{SLprop} shows that $g=0$. This means that $S(\lambda)$
	is injective for all $\lambda<0$.
	
	Let $H$ be the free Laplacian in $L^2(\RR^2)$,
	\[
	\dom H:=H^2(\RR^2),\quad H f:=-\Delta f.
	\]
	For any $f\in L^2(\mathbb{R}^2)$ and $\lambda\in\CC\setminus[0,\infty)$ we have $(H-\lambda)^{-1}f\in H^2(\RR^2)$, therefore,
	\begin{align*}
		\gamma^+\big[(H-\lambda)^{-1}f\big]_+ &=\gamma^-\big[(H-\lambda)^{-1}f\big]_-=\gamma (H-\lambda)^{-1}f,\\
		\gamma^+ \big[\pz (H-\lambda)^{-1}f\big]_+&=\gamma^- \big[\pz (H-\lambda)^{-1}f\big]_-=\gamma \pz (H-\lambda)^{-1}f,
	\end{align*}
	where
	\[
	\gamma:\, H^1(\RR^2)\to H^\half(\Sigma),\quad f\mapsto f|_\Sigma,
	\]
	is the standard Sobolev trace, and for the boundary operator $P_\alpha$ from \eqref{bc} we have then
	\begin{equation}
		\label{paf}
		P_{\alpha}(H-\lambda)^{-1}f=2\alpha \gamma \pz (H-\lambda)^{-1}f.
	\end{equation}
	Let us study in greater detail the expression on the right-hand side.
	
	\begin{lemma}\label{lemmapsilambda}
		For any $\lambda\in \cc\setminus[0,\infty)$ define the operator $\Psi_\lambda$ as follows:
		\begin{equation}
			\label{Llambda}
			\Psi_\lambda:L^2(\RR^2)\to L^2(\Sigma),\quad 
			\Psi_{\lambda}f:=2\gamma\pz (H-\lambda)^{-1}f,
		\end{equation}
		then:
		\begin{itemize}
			\item[(a)] $\Psi_\lambda:L^2(\rr^2)\rightarrow L^2(\Sigma)$ is compact with $\ran \Psi_{\lambda}\subseteq H^\half (\Sigma)$, 
			\item[(b)] $\Psi_{\lambda}^*=-2\pzb\cS (\bar{\lambda})$ with $\pzb:=\frac{1}{2}(\partial_1-i\partial_2)$,
			\item[(c)] for any $\varphi\in L^2(\Sigma)$ there holds
			\begin{align}
				\label{item2lemmaeq2}
				\gamma^+( \pz \Psi_\lambda^*\varphi)_+ + \gamma^-(\pz \Psi^*_\lambda\varphi)_-&=\bar{\lambda} S(\bar{\lambda})\varphi,\\
				\label{item2lemmaeq1}
				n\Big(\gamma^+ (\Psi^*_\lambda\varphi)_+ - \gamma^-(\Psi^*_\lambda\varphi)_-\Big)&=-\varphi.
			\end{align}
		\end{itemize} 
		Furthermore, if 
		\begin{equation}
			\label{Hlambda}
			\begin{aligned}
				\cN_{\lambda}:=\big\{f\in H^\half(\rr^2\setminus\Sigma):&\    (\pz f_+,\pz f_-) \in H^{1}(\rr^2),\\
				&\  (-\Delta -\bar{\lambda})f=0 \text{ in } \rr^2\setminus\Sigma\big\},
			\end{aligned}
		\end{equation}
		then
		\begin{itemize}
			\item[(d)] $\Psi_\lambda^*:L^2(\Sigma)\rightarrow \cN_{\lambda}$ is bijective.
		\end{itemize}
	\end{lemma}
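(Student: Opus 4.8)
The plan is to dispatch (a)--(c) by routine bookkeeping and to concentrate the effort on the bijectivity claim (d).

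For (a) I would factor $\Psi_\lambda=2\gamma\circ\pz\circ(H-\lambda)^{-1}$ through the bounded maps $(H-\lambda)^{-1}\colon L^2(\RR^2)\to H^2(\RR^2)$, $\pz\colon H^2(\RR^2)\to H^1(\RR^2)$ and $\gamma\colon H^1(\RR^2)\to H^\half(\Sigma)$; this already yields $\ran\Psi_\lambda\subseteq H^\half(\Sigma)$, and compactness from $L^2(\RR^2)$ to $L^2(\Sigma)$ follows from the compact embedding $H^\half(\Sigma)\hookrightarrow L^2(\Sigma)$. For (b) I would compute $\langle\Psi_\lambda f,\var\rangle_{L^2(\Sigma)}$ directly from the integral kernel of $(H-\lambda)^{-1}$, integrating by parts to move the Wirtinger derivative onto the kernel; a standard density argument legitimizes the manipulations and produces $\Psi_\lambda^*=-2\pzb\cS(\bar\lambda)$. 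For (c) I would set $u:=\cS(\bar\lambda)\var$, so that $\Psi_\lambda^*\var=-2\pzb u$ with $u_\pm\in H^{\frac32}_\Delta(\Omega_\pm)$ and $(-\Delta-\bar\lambda)u=0$ on $\RR^2\setminus\Sigma$; then $\pz\Psi_\lambda^*\var=-2\pz\pzb u=-\tfrac12\Delta u=\tfrac{\bar\lambda}{2}u$ off $\Sigma$, which gives \eqref{item2lemmaeq2} via the first identity in \eqref{SLprop}. For \eqref{item2lemmaeq1} I would decompose the boundary gradient as $\nabla u|_\Sigma=(\partial_N u|_\Sigma)N+(\partial_T u|_\Sigma)T$ (a.e.\ on $\Sigma$, with $T$ the unit tangent), which gives $2\gamma^\pm(\pzb u_\pm)=\bar n\bigl(\partial_N u_\pm|_\Sigma-i\,\partial_T u_\pm|_\Sigma\bigr)$; the tangential parts cancel in $\gamma^+(\pzb u_+)-\gamma^-(\pzb u_-)$ because $\gamma^+u_+=\gamma^-u_-$, and the normal parts combine to $\var$ by the second identity in \eqref{SLprop}, so $\gamma^+(\pzb u_+)-\gamma^-(\pzb u_-)=\tfrac{\bar n}{2}\var$; multiplying by $n$ and using $n\bar n=1$ yields \eqref{item2lemmaeq1}.

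Turning to (d): the inclusion $\ran\Psi_\lambda^*\subseteq\cN_\lambda$ follows from the same $u=\cS(\bar\lambda)\var$, since $\Psi_\lambda^*\var=-2\pzb u\in H^\half(\RR^2\setminus\Sigma)$, $\pz\Psi_\lambda^*\var=\tfrac{\bar\lambda}{2}u\in H^1(\RR^2)$, and $(-\Delta-\bar\lambda)\Psi_\lambda^*\var=-2\pzb(-\Delta-\bar\lambda)u=0$ off $\Sigma$; injectivity is immediate from \eqref{item2lemmaeq1}. The real content is surjectivity. Given $f\in\cN_\lambda$ (note that then $f_\pm\in H^\half_\Delta(\Omega_\pm)$ automatically, since $\Delta f_\pm=-\bar\lambda f_\pm$), I would set $\var:=-n\bigl(\gamma^+f_+-\gamma^-f_-\bigr)\in L^2(\Sigma)$ and $w:=f-\Psi_\lambda^*\var\in\cN_\lambda$; by \eqref{item2lemmaeq1} and the choice of $\var$ one gets $n(\gamma^+w_+-\gamma^-w_-)=0$, hence $\gamma^+w_+=\gamma^-w_-$. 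It then suffices to show that any $w\in\cN_\lambda$ with matching Dirichlet traces vanishes. Put $g:=\pz w\in H^1(\RR^2)$. From $\Delta w_\pm=-\bar\lambda w_\pm$ one gets $\Delta g_\pm=-\bar\lambda g_\pm$, and a Green's formula in $H^1_\Delta(\Omega_\pm)$ shows that $(-\Delta-\bar\lambda)g$ equals, in $\cD'(\RR^2)$, the single-layer distribution $\rho\,\delta_\Sigma$ with density $\rho:=\gamma^+_n g_+ + \gamma^-_n g_-\in H^{-\half}(\Sigma)$ (there is no derivative-of-$\delta$ term, because $g\in H^1(\RR^2)$ has a single-valued Dirichlet trace). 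Moreover $\Delta w_\pm=4\pzb\pz w_\pm=4\pzb g_\pm$, so $w_\pm=-\tfrac{4}{\bar\lambda}\pzb g_\pm$. I would then compute the Dirichlet-trace jump of $\pzb g$ across $\Sigma$ in two ways: from $w_\pm=-\tfrac{4}{\bar\lambda}\pzb g_\pm$ and $\gamma^+w_+=\gamma^-w_-$ it is $0$, while the boundary decomposition of $\pzb$ used in (c), together with the facts that $g$ has a single-valued Dirichlet trace (hence single-valued tangential derivative) and $\partial_N g_+|_\Sigma-\partial_N g_-|_\Sigma=\gamma^+_n g_+ + \gamma^-_n g_-=\rho$, gives $\tfrac{\bar n}{2}\rho$; comparing, $\rho=0$. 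Hence $(-\Delta-\bar\lambda)g=0$ in $\cD'(\RR^2)$; since $g\in L^2(\RR^2)$ this forces $g\in H^2(\RR^2)$ and $(H-\bar\lambda)g=0$, and as $\bar\lambda\notin\spec H=[0,\infty)$ we conclude $g=0$. Consequently $w_\pm=-\tfrac{4}{\bar\lambda}\pzb g_\pm=0$, i.e.\ $f=\Psi_\lambda^*\var$, proving surjectivity.

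The main obstacle will be the low-regularity trace bookkeeping, concentrated in the second evaluation of the jump of $\pzb g$: here $g$ lies only in $H^1_\Delta(\Omega_\pm)$, so $\nabla g_\pm$ has boundary traces merely in $H^{-\half}(\Sigma)$, and the identity $2\gamma^\pm(\pzb g_\pm)=\bar n\bigl(\partial_N g_\pm|_\Sigma-i\,\partial_T g_\pm|_\Sigma\bigr)$ has to be justified by approximating $g_\pm$ by smooth functions and passing to the limit, using the continuity of the trace maps $\gamma^\pm$, $\gamma^\pm_n$ on the scale $H^s_\Delta(\Omega_\pm)$ from \cite{bgm}; the identity $\pzb g_\pm=-\tfrac{\bar\lambda}{4}w_\pm\in H^\half_\Delta(\Omega_\pm)$ is helpful here, as it forces $\gamma^\pm(\pzb g_\pm)\in L^2(\Sigma)$ and controls the convergence. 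The remaining ingredients — the Green's formulas in the $H^s_\Delta$ scale, the fact that an $H^{-1}(\RR^2)$-distribution supported on $\Sigma$ with no normal-derivative component is $\rho\delta_\Sigma$ for some $\rho\in H^{-\half}(\Sigma)$, and the elliptic-regularity implication $g\in L^2(\RR^2),\ \Delta g\in L^2(\RR^2)\Rightarrow g\in H^2(\RR^2)$ — are standard.
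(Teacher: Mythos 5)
Parts (a) and (b), the first identity in (c), and the injectivity and range-inclusion parts of (d) follow the paper's own argument essentially verbatim. You deviate in two places. For \eqref{item2lemmaeq1} you use the pointwise normal/tangential decomposition $2\gamma^\pm(\pzb u_\pm)=\bar n\big(\partial_N u_\pm|_\Sigma-i\,\partial_T u_\pm|_\Sigma\big)$ for $u=\cS(\bar\lambda)\varphi$, i.e.\ the gradient jump relations for the single layer potential. This is true on a Lipschitz curve, but it is exactly the point where the paper is deliberately more cautious: it proves the identity $n\big(\gamma^+(2\pzb f_+)-\gamma^-(2\pzb f_-)\big)=\gamma^+_n f_+ +\gamma^-_n f_-$ in weak form, by integrating against traces of test functions and using only the Dirichlet and Neumann trace maps on the $H^s_\Delta$ scale, so that no componentwise gradient trace on the non-smooth $\Sigma$ is ever invoked. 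If you keep your route for (c) you must back it by the $L^2$ (nontangential) gradient trace and jump theory for layer potentials on Lipschitz domains, not treat it as bookkeeping.

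The genuine gap is in your surjectivity argument in (d). You pass to $g=\pz w$, write $(-\Delta-\bar\lambda)g=\rho\,\delta_\Sigma$ with $\rho=\gamma^+_n g_+ +\gamma^-_n g_-$, and want $\rho=0$ by evaluating the jump of $\gamma^\pm(\pzb g_\pm)$ through the same decomposition --- but now $g_\pm$ lies only in $H^1_\Delta(\Omega_\pm)$, so $\partial_N g_\pm|_\Sigma$ and $\partial_T g_\pm|_\Sigma$ exist only in $H^{-\half}(\Sigma)$, the desired identity lives in $H^{-\half}(\Sigma)$ and involves multiplication by the merely bounded function $\bar n$, which is not a bounded operation there; moreover your proposed justification (approximate $g_\pm$ by smooth functions) does not control the left-hand side: convergence $g^{(k)}\to g$ in $H^1_\Delta(\Omega_\pm)$ gives $\pzb g^{(k)}_\pm\to\pzb g_\pm$ only in $L^2(\Omega_\pm)$ and $\Delta\pzb g^{(k)}_\pm$ only in $H^{-1}(\Omega_\pm)$, so the Dirichlet traces $\gamma^\pm(\pzb g^{(k)}_\pm)$ need not converge to $\gamma^\pm(\pzb g_\pm)$, and the extra information $\pzb g_\pm\in H^\half_\Delta(\Omega_\pm)$ does not transfer to the approximants. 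As written, the crucial step $\rho=0$ is therefore not established. It is repairable: either redo the paper's integration-by-parts derivation of the jump identity at this lower regularity (all pairings still make sense precisely because $\pzb g_\pm=-\tfrac{\bar\lambda}{4}w_\pm\in H^\half_\Delta(\Omega_\pm)$ has an $L^2(\Sigma)$ trace), or bypass $\rho$ altogether: since $g\in H^1(\RR^2)$, its distributional Wirtinger derivative is the glued function $\pzb g=-\tfrac{\bar\lambda}{4}(w_+,w_-)$, and the jump formula applied to $(w_+,w_-)$ produces no $\delta_\Sigma$ term because $\gamma^+w_+=\gamma^-w_-$, giving $\Delta g=-\bar\lambda g$ in $\cD'(\RR^2)$ directly. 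The paper's route is simpler still: it never differentiates twice, but applies the jump formula to $\pz w$ itself, uses ellipticity of the first-order operator $\pz$ to get $w\in H^2_{\mathrm{loc}}(\RR^2)$, and concludes $w\in\dom H$, $Hw=\bar\lambda w$, hence $w=0$.
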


	\begin{proof}
		(a) The map $\Psi_\lambda$ is the double of the composition of the bounded linear operators
		\begin{gather*}
			(H-\lambda)^{-1}:\,L^2(\rr^2)\rightarrow H^2(\rr^2),
			\quad
			\pz:H^2(\RR^2)\to H^1(\RR),\\
			\gamma:H^1(\RR^2)\to H^\half(\Sigma).
		\end{gather*}
		As the embedding $H^\half(\Sigma)\hookrightarrow L^2(\Sigma)$ is compact, the claim follows.

		(b) Let $f\in \cD(\RR^2)$ and $\varphi\in L^2(\Sigma)$, then
		\begin{align*}
			\big\langle -2\pzb\cS &(\bar{\lambda})\varphi,f\big\rangle_{L^2(\RR^2)}=-\int_{\RR^2} \overline{\mathstrut(\partial_1-i\partial_2)\cS(\bar\lambda)\varphi}\,f\,\dd x\\
			&=-\int_{\RR^2} \big((\partial_1+ i\partial_2)\cS(\lambda)\Bar \varphi\big)\,f\,\dd x\\
			&=-\Big( (\partial_1+ i\partial_2)\cS(\lambda)\Bar \varphi,f\Big)_{\cD'(\RR^2),\cD(\RR^2)}\\
			&=\Big( \cS(\lambda)\Bar \varphi,(\partial_1-i\partial_2)f\Big)_{\cD'(\RR^2),\cD(\RR^2)}\\
			&=\Big( \cS(\lambda)\Bar \varphi,2\pz f\Big)_{\cD'(\RR^2),\cD(\RR^2)}\\
			&=\int_{\RR^2} \int_\Sigma \Phi_\lambda(x-y)\overline{\varphi(y)}\, \dd\sigma(y)\, 2\pz f(x)\, \dd x\\
			&=
			\int_\Sigma \overline{\varphi(y)} \int_{\RR^2} \Phi_\lambda(x-y)\, 2\pz f(x)\, \dd x \,\dd \sigma(y)\\
			&=
			\int_\Sigma \overline{\varphi(y)} \,2\gamma (H-\lambda)^{-1}\pz f(y) \,\dd\sigma(y)\\
			&=\big\langle \varphi,2\gamma (H-\lambda)^{-1}\pz f\big\rangle_{L^2(\Sigma)}.
		\end{align*}
		One easily checks that $(H-\lambda)^{-1}\pz f=\pz (H-\lambda)^{-1}f$ (for example, one applies the Fourier transform on both sides), so the previous computations yield
		\[
		\big\langle -2\pzb\cS (\bar{\lambda})\varphi,f\big\rangle_{L^2(\RR^2)}=\big\langle \varphi,\Psi(\lambda) f\big\rangle_{L^2(\Sigma)}.
		\]
		This extends by density to all $f\in L^2(\RR^2)$ and implies the sought identity.

		(c)	By \cite[Eq (2.127)]{GM} there holds $\ran \cS (\lambda)\subset H^{\frac{3}{2}}_\Delta(\RR^2\setminus\Sigma)$, which yields $\ran \Psi_{\lambda}^*\subset H^\half (\RR^2\setminus\Sigma)$. For any $\varphi\in L^2(\Sigma)$ one has $(-\Delta -\lambda)\cS (\lambda)=0$ in $\RR^2\setminus\Sigma$, and due to $\pz\pzb=\frac{1}{4}\Delta$ we obtain
		\begin{equation}\label{eqdzbar}
			\begin{aligned}
				\big(\partial_{\bar{z}}\Psi^*_{\lambda}\varphi\big)_{\pm}&=\Big(-\frac{1}{2}\Delta \cS(\bar {\lambda})\varphi\Big)_{\pm}\\
				&=\frac{\bar{\lambda}}{2} \big(\cS(\bar{\lambda})\varphi\big)_{\pm} \in H^{\frac{3}{2}}(\Omega_{\pm})\subset H^1(\Omega_{\pm}),
			\end{aligned}
		\end{equation}
		We have $\cS(\Bar{\lambda})\varphi\in H^1(\RR^2)$, hence,
		\[
		\gamma^+\big(\cS(\Bar \lambda)\varphi\big)_+=\gamma^-\big(\cS(\Bar \lambda)\varphi\big)_-=\gamma S(\bar\lambda)\varphi,
		\]
		and \eqref{eqdzbar} gives
		\[
		\gamma^\pm \big(\partial_{\bar{z}}\Psi^*_{\lambda}\varphi\big)_{\pm}= \frac{\bar{\lambda}}{2} \gamma \cS(\bar{\lambda})\varphi=\frac{\bar{\lambda}}{2} S(\bar{\lambda})\varphi.
		\]
		This shows the identity \eqref{item2lemmaeq2} as well as the inclusion
		\begin{equation}
			\label{lokal00}
			\Big(\big(\partial_{\bar{z}}\Psi^*_{\lambda}\varphi\big)_+, \big(\partial_{\bar{z}}\Psi^*_{\lambda}\varphi\big)_-\Big)\in H^1(\RR^2).
		\end{equation}
		
		Let $f=(f_+,f_-)\in H^1(\RR^2)$ with $f_\pm\in H^{\frac{3}{2}}_\Delta(\Omega_\pm)$ and $g=(g_+,g_-)\in C^\infty_0(\RR^2)$, then
		the integration by parts gives
		\begin{align*}
			\pm \int_\Sigma &n \gamma^\pm(2\pzb f_\pm)\gamma g\, \dd\sigma=\int_{\Omega_\pm} 2\pz\big((2 \pzb f_\pm ) g_\pm \big)\, \dd x\\
			&=\int_{\Omega_\pm} (4\pz\pzb f_\pm)g_\pm\, \dd x +\int_{\Omega_\pm} (2\pzb f_\pm ) (2\pz g_\pm)\, \dd x\\
			&=\int_{\Omega_\pm} (\Delta f_\pm)g_\pm\, \dd  x +\int_{\Omega_\pm} 4\pzb (f_\pm\,\pz g_\pm)\, \dd x-\int_{\Omega_\pm} f_\pm\,4\pzb\pz g_\pm\, \dd x\\
			&=\int_{\Omega_\pm} (\Delta f_\pm)g_\pm\, \dd x \pm \int_{\Sigma} 2\Bar n \gamma^\pm f_\pm\,\gamma \pz g\, \dd\sigma-\int_{\Omega_\pm} f_\pm\Delta g_\pm\, \dd x\\
			&=\int_{\Sigma} (\gamma^\pm_n f_+) \gamma g\, \dd\sigma \pm \int_{\Sigma} 2\Bar n \gamma^\pm f_\pm\,\gamma \pz g\, \dd\sigma-\int_{\Sigma} \gamma^\pm f_\pm\,\gamma^\pm_n g_\pm\, \dd\sigma.
		\end{align*}	
		We have
		\[
		\gamma^+f_+=\gamma^- f_-,\quad \gamma^+_n g_+ + \gamma^-_n g_-=0,
		\]
		which yields (with $h:=\gamma g$)
		\[
		\int_\Sigma n \big(\gamma^+(2\pzb f_+)- \gamma^-(2\pzb f_-)\big)\,h\,\dd\sigma=
		\int_{\Sigma} (\gamma^+_n f_+ + \gamma^+_n f_-) h\, \dd\sigma.
		\]
		By density this extends to all $g\in H^1(\RR^2)$, hence, to all $h\in H^\half(\Sigma)$, and then for all $h\in L^2(\Sigma)$, and one obtains
		\[
		n \big(\gamma^+(2\pzb f_+)- \gamma^-(2\pzb f_-)\big)=\gamma^+_n f_+ + \gamma^+_n f_-.
		\]
		For $f:=\cS(\Bar\lambda)\varphi$ with $\varphi\in L^2(\Sigma)$ we have $2\pzb f=-\Psi_\lambda^*\varphi$, which results in
		\[
		n\Big(\gamma^+ (\Psi^*_\lambda\varphi)_+ - \gamma^-(\Psi^*_\lambda\varphi)_-\Big)=-\big(\gamma^+_n \cS(\bar\lambda)\varphi+\gamma^-_n \cS(\bar\lambda)\varphi\big)\stackrel{\eqref{SLprop}}{=}-\varphi
		\]
		and proves \eqref{item2lemmaeq1}.
		
		(d) In $\cD'(\RR^2\setminus\Sigma)$ one has
		\begin{align*}
			-\Delta \Psi_{\lambda}^*\varphi&=-2\Delta \pzb \cS(\Bar \lambda)\varphi=-2\pzb \big(\Delta \cS(\Bar \lambda)\varphi\big)\\
			&=-2\big( \pzb \Bar\lambda \cS(\Bar \lambda)\varphi\big)=\Bar \lambda \Psi_\lambda^*\varphi,
		\end{align*}
		i.e. $(-\Delta-\bar\lambda)\Psi_\lambda^*\varphi=0$ in $\RR^2\setminus\Sigma$. Together with \eqref{eqdzbar} and \eqref{lokal00}
		this shows the inclusion $\Psi_\lambda^*\big( L^2(\Sigma)\big)\subset \cN_\lambda$,
		and \eqref{item2lemmaeq1} implies the injectivity of $\Psi_\lambda^*$.
		
		It remains to show $\Psi_\lambda^*\big( L^2(\Sigma)\big)=\cN_\lambda$.
		Let $f=(f_+,f_-)\in \cN_{\lambda}$ and set
		\[
		\varphi:=-n(\gamma^+f_+ - \gamma^-f_-)\in L^2(\Sigma),\quad g:=\Psi_{\lambda}^*\varphi,\quad u:=f-g.
		\]
		We are going to show that $u=0$. One has already  $g\in \cN_\lambda$, hence, $u\in \cN_\lambda\subset H^\half_\Delta(\RR^2\setminus\Sigma)$.
		Using \eqref{item2lemmaeq1} we get
		\[
		n(\gamma^+g_+ - \gamma^-g_-)=-\varphi=n(\gamma^+f_+ - \gamma^-f_-),
		\]
		hence, $n(\gamma^+u_+ - \gamma^-u_-)=0$, and then $\gamma^+u_+ = \gamma^-u_-$.
		Due to $u\in\cN_\lambda$ we have $(\pz u_+,\pz u_-)\in H^1(\rr^2)$, and using the jump formula in $\cD'(\RR^2)$
		we obtain
		\[
		\pz u=( \pz u_+,\pz u_-) + n (\gamma^+u_+-\gamma^-u_-)=( \pz u_+,\pz u_-)\in H^{1}(\RR^2).
		\]
		As $\pz$ is a first-order elliptic operator, due to the elliptic regularity theorem one arrives at $u\in H^2_\mathrm{loc}(\RR^2)$.
		From the inclusion $u\in \cN_\lambda$ we obtain $(-\Delta-\Bar \lambda)u=0$ in $\RR^2\setminus\Sigma$,
		and from $u\in H^2_\mathrm{loc}(\RR^2)$ it follows $(-\Delta-\Bar \lambda)u=0$ in $\RR^2$, so $u\in\dom H$ with  $H u=\Bar \lambda u$,
		and then $u=0$.	
	\end{proof}

	\begin{theorem}\label{thm2} Let $\alpha\in\RR$, then:
		\begin{itemize}
			\item[(a)] For any $\lambda\in\CC\setminus [0,+\infty)$ there holds
			\begin{equation}
				\label{eqbs}
				\ker(H_\alpha-\lambda)= \Psi_{\bar{\lambda}}^*\ker \big(I-\alpha\lambda S(\lambda)\big).
			\end{equation}
			In particular,
			\[
			\lambda\in\specp H_\alpha \iff 0\in \specp \big(I-\alpha\lambda S(\lambda)\big).
			\]
			\item[(b)]  For any $\lambda\in\cc\setminus\rr$ the operator
			\[
			\big(I-\alpha\lambda S(\lambda)\big)^{-1}: L^2(\Sigma)\rightarrow L^2(\Sigma)
			\] is well-defined and bounded,
			and there holds
			\begin{equation}\label{resolvent}
				(H_\alpha-\lambda)^{-1}=(H-\lambda)^{-1}+\alpha\Psi^*_{\bar\lambda}\big(I-\alpha\lambda S(\lambda)\big)^{-1}\Psi_{\lambda}.
			\end{equation}
			\item[(c)] The operator $H_\alpha$ is self-adjoint.
		\end{itemize}
	\end{theorem}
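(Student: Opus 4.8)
The plan is to prove (a)--(c) in one sweep as a Krein-type resolvent argument resting on Lemma~\ref{lemmapsilambda}. The key preliminary observation I would record is the action of $P_\alpha$ on the range of $\Psi^*_{\bar\lambda}$: for $\varphi\in L^2(\Sigma)$ and $\lambda\in\cc\setminus[0,\infty)$, applying the two identities of Lemma~\ref{lemmapsilambda}(c) with $\bar\lambda$ in place of $\lambda$ (so that $\overline{\bar\lambda}=\lambda$ and $S(\overline{\bar\lambda})=S(\lambda)$) and adding $\alpha$ times \eqref{item2lemmaeq2} to \eqref{item2lemmaeq1} yields
\begin{equation}\label{engine}
	P_\alpha\big(\Psi^*_{\bar\lambda}\varphi\big)=-\varphi+\alpha\lambda\,S(\lambda)\varphi=-\big(I-\alpha\lambda S(\lambda)\big)\varphi .
\end{equation}
Everything else follows from \eqref{engine} together with the mapping properties of $\Psi^*_{\bar\lambda}$ from Lemma~\ref{lemmapsilambda}(a),(d).

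For part (a) I would argue that every $f\in\ker(H_\alpha-\lambda)$ automatically belongs to $\cN_{\bar\lambda}$ (its components lie in $H^\half_\Delta$, its Wirtinger derivative in $H^1(\rr^2)$, and $(-\Delta-\lambda)f=0$ off $\Sigma$), hence $f=\Psi^*_{\bar\lambda}\varphi$ for a unique $\varphi\in L^2(\Sigma)$ by Lemma~\ref{lemmapsilambda}(d), whereupon \eqref{engine} converts $P_\alpha f=0$ into $\varphi\in\ker(I-\alpha\lambda S(\lambda))$; conversely, for $\varphi\in\ker(I-\alpha\lambda S(\lambda))$ the vector $\Psi^*_{\bar\lambda}\varphi\in\cN_{\bar\lambda}$ satisfies all requirements in \eqref{def:schrodinger} and, by \eqref{engine}, also the transmission condition, so it lies in $\ker(H_\alpha-\lambda)$. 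This establishes \eqref{eqbs}, and the equivalence $\lambda\in\specp H_\alpha\iff 0\in\specp(I-\alpha\lambda S(\lambda))$ then follows because $\Psi^*_{\bar\lambda}$ is injective.

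For part (b) the first task is to invert $I-\alpha\lambda S(\lambda)$ for $\lambda\in\cc\setminus\rr$. Since $S(\lambda)$ is compact, $I-\alpha\lambda S(\lambda)$ is Fredholm of index $0$, so it suffices to prove injectivity; given $\varphi=\alpha\lambda S(\lambda)\varphi$ I would substitute this into the left-hand side of the energy identity \eqref{ssll} to get
\[
	\lambda\Big(\alpha\|S(\lambda)\varphi\|_{L^2(\Sigma)}^2+\|\cS(\lambda)\varphi\|_{L^2(\rr^2)}^2\Big)=\|\nabla\cS(\lambda)\varphi\|_{L^2(\Omega_+)}^2+\|\nabla\cS(\lambda)\varphi\|_{L^2(\Omega_-)}^2 .
\]
As the right-hand side is real and nonnegative while $\Im\lambda\neq0$, the bracket must vanish, whence the right-hand side vanishes as well and $\nabla\cS(\lambda)\varphi=0$ on $\rr^2$, hence $\cS(\lambda)\varphi=0$ (it lies in $H^1(\rr^2)$), hence $\varphi=\gamma^+_n\cS(\lambda)\varphi+\gamma^-_n\cS(\lambda)\varphi=0$ by \eqref{SLprop}. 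Then I would set $R(\lambda):=(H-\lambda)^{-1}+\alpha\Psi^*_{\bar\lambda}(I-\alpha\lambda S(\lambda))^{-1}\Psi_\lambda$ and check, for $f\in L^2(\rr^2)$ and $\varphi:=(I-\alpha\lambda S(\lambda))^{-1}\Psi_\lambda f$, that $R(\lambda)f=(H-\lambda)^{-1}f+\alpha\Psi^*_{\bar\lambda}\varphi$ is the sum of an $H^2(\rr^2)$-function and an element of $\cN_{\bar\lambda}$, hence has components in $H^\half_\Delta(\Omega_\pm)$ and Wirtinger derivative in $H^1(\rr^2)$, and that $P_\alpha R(\lambda)f=0$: indeed \eqref{paf} gives $P_\alpha(H-\lambda)^{-1}f=\alpha\Psi_\lambda f$ while \eqref{engine} gives $P_\alpha(\alpha\Psi^*_{\bar\lambda}\varphi)=-\alpha(I-\alpha\lambda S(\lambda))\varphi=-\alpha\Psi_\lambda f$. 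Thus $R(\lambda)f\in\dom H_\alpha$ with $(H_\alpha-\lambda)R(\lambda)f=(-\Delta-\lambda)(H-\lambda)^{-1}f=f$; combined with $\ker(H_\alpha-\lambda)=\{0\}$ from (a), this shows $H_\alpha-\lambda$ is bijective with bounded inverse $R(\lambda)$, i.e.\ \eqref{resolvent}.

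For part (c), once (b) holds $H_\alpha$ is closed (it has a bounded everywhere-defined resolvent at every non-real point) and densely defined ($\dom H_\alpha$ contains $C_0^\infty(\Omega_+)\oplus C_0^\infty(\Omega_-)$), so it remains to verify $R(\lambda)^*=R(\bar\lambda)$: taking adjoints in the formula for $R(\lambda)$ and using $\big((H-\lambda)^{-1}\big)^*=(H-\bar\lambda)^{-1}$, $(\Psi^*_{\bar\lambda})^*=\Psi_{\bar\lambda}$, $(\Psi_\lambda)^*=\Psi^*_\lambda$, this reduces to the kernel identity $S(\lambda)^*=S(\bar\lambda)$, which is immediate from $\overline{K_0(z)}=K_0(\bar z)$ and the convention $\Im\sqrt\lambda>0$; from $R(\lambda)^*=R(\bar\lambda)$ one reads off $\big(H_\alpha^*-\bar\lambda\big)^{-1}=(H_\alpha-\bar\lambda)^{-1}$, hence $H_\alpha^*=H_\alpha$. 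An alternative I would keep in reserve is to check symmetry of $H_\alpha$ directly by integrating by parts twice via $\Delta=4\pzb\pz$, using that $\pz f$ and $\pz g$ have no jump across $\Sigma$ for $f,g\in\dom H_\alpha$; the boundary terms then reduce to an expression proportional to $\alpha\int_\Sigma(\gamma\pz f)\overline{\gamma\pz g}\dd\sigma$ that occurs twice with opposite signs and cancels after inserting $P_\alpha f=P_\alpha g=0$, and one concludes from $\cc\setminus\rr\subseteq\rho(H_\alpha)$. The step I expect to be the real obstacle is not algebraic but the regularity bookkeeping in (b): one must be careful that the single-layer part of $R(\lambda)$, which lies only in $H^\half_\Delta(\Omega_\pm)$, nevertheless has Wirtinger derivative in $H^1(\rr^2)$ --- this is exactly the content of \eqref{eqdzbar} --- so that it genuinely belongs to $\dom H_\alpha$ in the sense of \eqref{def:schrodinger}; granted that, the manipulations around \eqref{engine} are routine.
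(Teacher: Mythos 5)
Your argument is correct, and large parts of it (the identity $P_\alpha\Psi^*_{\bar\lambda}\varphi=-(I-\alpha\lambda S(\lambda))\varphi$, all of part (a), and the verification that the candidate resolvent lies in $\dom H_\alpha$ and is a right inverse of $H_\alpha-\lambda$) coincide with the paper's proof. Where you genuinely diverge is in how the injectivity of $I-\alpha\lambda S(\lambda)$ and the self-adjointness are obtained. The paper first proves that $H_\alpha$ is symmetric by an integration by parts on $\dom H_\alpha$, which produces the quadratic-form identity \eqref{qfha}; symmetry then gives $\ker(H_\alpha-\lambda)=\{0\}$ for non-real $\lambda$, hence injectivity of $I-\alpha\lambda S(\lambda)$ via (a), and self-adjointness follows from symmetry combined with $\spec H_\alpha\subset\RR$. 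You instead prove injectivity of $I-\alpha\lambda S(\lambda)$ directly from the single-layer identity \eqref{ssll} by taking imaginary parts (with the paper's convention $\langle u,v\rangle=\int\bar u\,v$ your substitution is exact, and the argument is insensitive to the $\lambda$ versus $\bar\lambda$ ambiguity in any case), and you obtain self-adjointness from $R(\lambda)^*=R(\bar\lambda)$, which rests on $S(\lambda)^*=S(\bar\lambda)$ and $(\Psi^*_{\bar\lambda})^*=\Psi_{\bar\lambda}$; there is no circularity, since the triviality of $\ker(H_\alpha-\lambda)$ for non-real $\lambda$ is then deduced from (a) rather than from symmetry. Both routes are sound: the paper's has the side benefit that \eqref{qfha} is needed anyway later (Remark~\ref{rmk9}, to exclude negative spectrum for $\alpha\ge 0$), while yours avoids justifying the boundary integration by parts for general, low-regularity elements of $\dom H_\alpha$ on the Lipschitz interface, at the price of invoking \eqref{ssll} for non-real $\lambda$ (legitimate, since the exponential decay of $K_0$ makes the exterior integration by parts valid) and the reflection identity for $S$. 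One minor remark: your ``reserve'' sketch of the direct symmetry proof is not quite how it comes out --- the boundary term does not cancel but, after inserting the transmission condition, becomes $\alpha\|\gamma^+\pz f_++\gamma^-\pz f_-\|_{L^2(\Sigma)}^2$, which is real; since your main line does not use this, it is inessential.
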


	\begin{proof}
		Due to $C^\infty_0 (\rr^2\setminus\Sigma)\subset \dom H_\alpha$ the operator $H_\alpha$ is densely defined.
		Let us show that it is also symmetric. Let $f\in\dom H_\alpha$, then
		\begin{align*}
			\langle H_\alpha f,f\rangle_{L^2(\rr^2)}&= \langle -\Delta f_{+},f_{+}\rangle_{L^2(\Omega_{+})}+ \langle -\Delta f_{-},f_{-}\rangle_{L^2(\Omega_{-})}.
		\end{align*}
		Using the integration by parts we obtain
		\begin{align*}
			\langle -\Delta f_\pm,f_\pm\rangle_{L^2(\Omega_{\pm})}&=
			\langle -4\pzb \pz f_{\pm},f_{\pm}\rangle_{L^2(\Omega_{\pm})}\\
			&=
			4\langle \pz f_{\pm},\pz f_{\pm} \rangle_{L^2(\Omega_{\pm})}\mp 
			2 \langle \gamma^{\pm} \pz f_{\pm}, n\gamma^{\pm}f_{\pm} \rangle_{L^2(\Sigma)}.
		\end{align*}
		Due to $(\pz f_+,\pz f_-)\in H^1(\RR^2)$ one has $\gamma^{+}\pz f_{+}=\gamma^{-}\pz f_{-}$, hence,
		\begin{align*}
			\langle H_\alpha f,f\rangle_{L^2(\rr^2)}=&4\|(\partial_{\bar{z}}f_{+},\partial_{\bar{z}}f_{-})\|^2_{L^2(\rr^2)}\\
			& -\big\langle \gamma^{+}\pz f_{+}+ \gamma^{-}\pz f_{-}, n(\gamma^{+}f_{+}-\gamma^{-}f_{-}) \big\rangle_{L^2(\Sigma)}.
		\end{align*}
		The transmission condition $P_\alpha f=0$ implies
		\[
		n(\gamma^+f_+-\gamma^-f_- )=-\alpha(\gamma^+\pz f_+ +\gamma^-\pz f_-),
		\]
		so one arrives at
		\begin{equation}
			\label{qfha}
			\begin{aligned}
				\langle H_\alpha f,f\rangle_{L^2(\rr^2)}&=4\big\|(\pz f_+,\pz f_-)\big\|^2_{L^2(\mathbb{R}^2)}\\
				&\quad +\alpha\| \gamma^{+}\pz f_{+} + \gamma^{-}\pz f_{-}\|_{L^2(\Sigma)}^2,
			\end{aligned}
		\end{equation}
		and the right-hand side is obviously real-valued, which shows the symmetry of $H_\alpha$.

		(a) Let $\lambda\in\mathbb{C}\setminus [0,+\infty)$. We remark first that
		\[
		\ker(H_\alpha-\lambda)=\big\{ f\in \cN_{\bar\lambda}:\, P_\alpha f=0\big\},
		\]
		with $\cN_\lambda$ from \eqref{Hlambda}.
		For each function  $f\in \cN_{\bar\lambda}$ there is a unique $\varphi\in L^2(\Sigma)$ with $f=\Psi_{\bar{\lambda}}^*\varphi$, see Lemma \ref{lemmapsilambda}(d). By Lemma \ref{lemmapsilambda}(c) there holds
		\begin{align*}
			P_\alpha f&\equiv n(\gamma^+f_+-\gamma^-f_- ) +\alpha(\gamma^+\pz f_+ +\gamma^-\pz f_-)\\
			&=-\varphi+\alpha \lambda S(\lambda)\varphi=-\big(I-\alpha\lambda S(\lambda)\big)\varphi,
		\end{align*}
		showing that $P_\alpha f=0$ if and only if $\varphi \in\ker \big(I-\alpha\lambda S(\lambda)\big)$.

		(b) Let $\lambda\in\CC\setminus\RR$. As  $H_\alpha$ is symmetric, there holds $\ker(H_\alpha-\lambda)=\{0\}$, and then $I-\alpha\lambda S(\lambda)$ is  injective by (a). As $S(\lambda)$ is compact, the operator $I-\alpha\lambda S(\lambda)$ is also surjective due to Fredholm alternative,
		so its inverse is well-defined and bounded.
		
		Let $f\in L^2(\RR^2)$, then $\big(I-\alpha\lambda S(\lambda)\big)^{-1}\Psi_{\lambda}f\in L^2(\Sigma)$ is well-defined, so consider the function
		\begin{equation}\label{resolventformula}
			g:=(H-\lambda)^{-1}f+\alpha \Psi_{\bar{\lambda}}^*\big(I-\alpha\lambda S(\lambda)\big)^{-1}\Psi_{\lambda}f.
		\end{equation}
		We are going to show that $g\in \dom H_\alpha$.
		The first summand on the right-hand side of \eqref{resolventformula} is in $H^2(\RR^2)$ and the second one
		is in $\cN_{\bar\lambda}$, which yields
		\[
		g\in H^\half_{\Delta}(\RR^2\setminus\Sigma), \quad (\pz g_+,\pz g_-)\in H^1(\RR^2).
		\]
		Using Lemma \ref{lemmapsilambda}(c) we obtain
		\begin{align*}
			P_{\alpha}&\Psi_{\bar{\lambda}}^*\big(I-\alpha\lambda S(\lambda)\big)^{-1}\Psi_{\lambda}f\\
			&=-\big(I-\alpha\lambda S(\lambda)\big)^{-1}\Psi_{\lambda}f+\alpha\lambda S(\lambda)\big(I-\alpha\lambda S(\lambda)\big)^{-1}\Psi_{\lambda}f \\
			&= -\big(I-\alpha\lambda S(\lambda)\big)\big(I-\alpha\lambda S(\lambda)\big)^{-1}\Psi_{\lambda}f= -\Psi_{\lambda}f,
		\end{align*}
		and by \eqref{paf} we have $P_{\alpha}(H-\lambda)^{-1}f=\alpha \Psi_\lambda f$, therefore,
		\begin{align*}
			P_\alpha g&=P_{\alpha}(H-\lambda)^{-1}f+\alpha \Psi_{\bar{\lambda}}^*\big(I-\alpha\lambda S(\lambda)\big)^{-1}\Psi_{\lambda}f\\
			&=-\alpha \Psi_\lambda f+\alpha \Psi_\lambda f=0,
		\end{align*}
		which shows $g\in \dom H_\alpha$. Finally, in $\cD'(\RR^2\setminus\Sigma)$ there holds
		\begin{align*}
			(-\Delta-\lambda)g&=(-\Delta-\lambda)(H-\lambda)^{-1}f\\
			&\quad+\alpha (-\Delta-\lambda)\Psi_{\bar{\lambda}}^*\big(I-\alpha\lambda S(\lambda)\big)^{-1}\Psi_{\lambda}f,
		\end{align*}	
		and using $(-\Delta-\lambda)\Psi_{\bar{\lambda}}^*=0$, see Lemma \ref{lemmapsilambda}(d), and
		\[
		(-\Delta-\lambda)(H-\lambda)^{-1}f=(H-\lambda)(H-\lambda)^{-1}f=f,
		\]	
		we arrive at $(-\Delta-\lambda)g=f$. This shows $g=(H_\alpha-\lambda)^{-1}f$.
		
		(c) We have seen that $H_\alpha$ is symmetric, and due to (b) there holds $\spec H_\alpha\subset\RR$, which shows the self-adjointness.
	\end{proof}
	
	\begin{remark} With the resolvent formula \eqref{resolvent}, many results obtained in \cite{bhs}
		for smooth $\Sigma$ can be transferred to the case of Lipschitz $\Sigma$. The resolvent formula has literally the same form as in \cite{bhs},
		and most results used for $S(\lambda)$ hold for Lipschitz $\Sigma$ too, as they are mainly based on $L^2$-estimates for the integral kernels
		and do not require the smoothness of $\Sigma$. We will address some of the results in a more precise way.
	\end{remark}
	
	\begin{remark}
		For $c,\eta,\tau\in\RR$, $c\ne 0$, denote by
		$B_{c,\eta,\tau}$ the operator in $L^2(\RR^2,\CC^2)$ acting as
		\[
		B_{c,\eta,\tau} (f_+,f_-):=(D_cf_+,D_cf_-)
		\]
		on the domain
		\begin{align*}
			\dom\, B_{c,\eta,\tau}:=\bigg\{&(f_+,f_-)\in H^\half(\Omega_+, \mathbb{C}^2)\oplus H^\half(\Omega_-,\mathbb{C}^2):\\
			&(D_cf_+,D_c f_-)\in L^2(\Omega_+,\mathbb{C}^2)\oplus L^2(\Omega_-,\mathbb{C}^2),\\
			ic\begin{spmatrix} 0 &\Bar n\\
				n & 0
			\end{spmatrix}(\gamma^+f_+&-\gamma^- f_-)+\frac{1}{2}\begin{spmatrix}
				\eta +\tau & 0 \\
				0 & \eta-\tau
			\end{spmatrix}(\gamma^+f_+ +\gamma^- f_-)=0
			\bigg\},
		\end{align*}
		then \cite[Theorem 3.1]{bpz} states that $B_{c,-\eta,\eta}$ is self-adjoint for any $c\ne 0$ and $\eta\in\RR$
		and derives a resolvent formula. Namely, if one denotes by $B_c$ the free Dirac operator in $L^2(\RR^2,\CC^2)$,
		\[
		\dom B_c=H^1(\RR^2,\CC^2),\quad B_cf:=D_cf,
		\]
		then for any $\lambda\in\CC\setminus\RR$ one has the resolvent formula
		\begin{multline*}
			\Big(B_{c,-\frac{\alpha c^2}{2},\frac{\alpha c^2}{2}}-\big(\lambda+\tfrac{c^2}{2}\big)\Big)^{-1}=\Big(B_c-\big(\lambda+\tfrac{c^2}{2}\big)\Big)^{-1}\\
			+\Phi_{\lambda+\frac{c^2}{2}} \begin{pmatrix}
				0 & 0 \\
				0 & \big(I-\alpha c^2 \Theta_{\lambda+\frac{c^2}{2}}\big)^{-1}
			\end{pmatrix}
			\Phi_{\Bar\lambda+\frac{c^2}{2}}^*,
		\end{multline*}
		where
		\begin{align*}
			\Phi_{\lambda}:&\  L^2(\Sigma,\cc^{2})  \longrightarrow L^2(\rr^2,\cc^2),\\
			\Phi_{\lambda}g(x) &=\int_{\Sigma} G_\lambda(x-y)g(y)\,\dd\sigma(y), \quad  x\in\rr^2\setminus\Sigma,\\
			G_\lambda (x)&:= \dfrac{1}{2\pi c} K_{0}\Big( -i\sqrt{\tfrac{\lambda^2}{c^2}-\tfrac{c^2}{4}}|x| \Big)
			\begin{spmatrix}
				\frac{\lambda}{c} +\frac{c}{2} & 0 \\
				0 & \frac{\lambda}{c} -\frac{c}{2}
			\end{spmatrix}\\
			&\quad+ 
			\dfrac{1}{2\pi c |x|} \sqrt{\tfrac{\lambda^2}{c^2}-\tfrac{c^2}{4}}
			K_{1}\Big( -i\sqrt{\tfrac{\lambda^2}{c^2}-\tfrac{c^2}{4}}|x| \Big)
			\begin{spmatrix}
				0 & x_1-ix_2\\
				x_1+i x_2 & 0
			\end{spmatrix},\\
			\Theta(\lambda)&:=\tfrac{1}{c}\big(\tfrac{\lambda}{c}-\tfrac{c}{2}\big)S\big({\sqrt{\tfrac{\lambda^2}{c^2}-\tfrac{c^2}{4}}}\big).
		\end{align*}
		With the above resolvent formulas one can use the same computation as in \cite[Theorem 1.2]{bhs}, which only involves $L^2$-estimates
		for various integral operators in $L^2(\Sigma)$, to show that
		for any fixed $\alpha\in\RR$ and $\lambda\in\CC\setminus\RR$ one has
		\[
		\left\|\Big(B_{c,-\frac{\alpha c^2}{2},\frac{\alpha c^2}{2}}-\big(\lambda+\tfrac{c^2}{2}\big)\Big)^{-1}-\begin{pmatrix}
			(H_\alpha-\lambda)^{-1} & 0 \\
			0 & 0
		\end{pmatrix}\right\|=O\Big(\frac{1}{c}\Big),\ c\to \infty.\]
	\end{remark}
	
	\begin{remark}\label{rmk9}
		Similarly one derives the basic spectral properties of $H_\alpha$. First, due to the compactness
		of $\Psi_\lambda$, see Lemma \ref{lemmapsilambda}(a) and the resolvent formula \eqref{resolvent}
		we conclude that $(H_\alpha-\lambda)^{-1}-(H-\lambda)^{-1}$ is compact for any $\lambda\in\CC\setminus\RR$,
		which implies
		\begin{equation}
			\label{sess}
			\spece H_\alpha=\spece H=[0,\infty) \text{ for any }\alpha\in\RR.
		\end{equation}
		For any $\alpha\ge 0$ and any $f\in\dom H_\alpha$ one has $\langle H_\alpha f,f\rangle_{L^2(\RR^2)}\ge 0$ due to \eqref{qfha}, which gives
		$\spec H_\alpha\subset[0,\infty)$, in particular,
		\[
		\specd H_\alpha=\emptyset \text{ for any }\alpha\ge 0.
		\]
		
		As for any $\lambda<0$ the operator $S(\lambda)$ is compact, self-adjoint, non-negative, and injective (see the beginning of Section~\ref{sec2}), we can enumerate its eigenvalues $\mu_n\big(S(\lambda)\big)$ counting the multiplicities such that
		\[
		0<\mu_1\big(S(\lambda)\big)\le \mu_2\big(S(\lambda)\big)\le\dots
		\]
		and for each $n\in\NN$ one shows that the maps
		\begin{align*}
			(-\infty,0)\ni \lambda&\mapsto \mu_n\big(S(\lambda)\big)\in (0,\infty),\\	
			(-\infty,0)\ni \lambda&\mapsto \lambda \mu_n\big(S(\lambda)\big)\in (-\infty,0)
		\end{align*}
		are continuous and strictly monotonically increasing with
		\[
		\lim_{\lambda\to-\infty} \lambda \mu_n\big(S(\lambda)\big)=-\infty,
		\quad
		\lim_{\lambda\to 0^-} \lambda \mu_n\big(S(\lambda)\big)=0.
		\]
		In fact,  the argument of \cite[Proposition 2.2(i)]{bhs} for smooth $\Sigma$ still applies, as the proof of the above properties is
		only based on the integration by parts and some estimates from \cite{GS}, which are also valid for Lipschitz $\Sigma$. It follows that for any $n\in\NN$ and any $\alpha<0$ the equation
		\[
		\alpha \lambda \mu_n\big(S(\lambda)\big)=1
		\]
		has a unique solution $\lambda_n\big(H_\alpha)$, and by Theorem \ref{thm2} the number $\lambda_n(H_\alpha)$ is the $n$-th negative eigenvalue
		of $H_\alpha$, if enumerated in the non-increasing order with multiplicities taken into account. This shows that $\specd H_\alpha$ is an infinite discrete subset of $(-\infty,0)$ and does not accumulate at $0$. At the same time, $\specd H_\alpha$ cannot have any accumulation point in $(-\infty,0)$: any such accumulation point would be in the essential spectrum of $H_\alpha$ in contraction to \eqref{sess}. As an infinite sequence has at least one (infinite or finite) accumulation point, it follows that $\lim\limits_{n\to\infty}\lambda_n(H_\alpha)=-\infty$ for any $\alpha<0$.
	\end{remark}
	
	The discussion in this section shows that the qualitative spectral picture	for $H_\alpha$ with Lipschitz $\Sigma$ (essential spectrum, cardinality of the discrete spectrum) is the same as for smooth $\Sigma$. In order to show that the non-smoothness produced some new effects, we will look at the asymptotic behavior of $\lambda_n\big(H_\alpha\big)$ as $\alpha\to 0^-$ in the next section.

	\section{Relations between oblique transmission conditions and $\delta$-potentials}\label{sec3}
	
	As already mentioned in the introduction, in \cite{bhs} it was shown that for any fixed $n\in\NN$ one has
	\begin{equation}
		\label{lnha}
		\lambda_n(H_\alpha)=-\frac{4}{\alpha^2}+O(1) \text{ for $\alpha\to 0^-$, if $\Sigma$ is smooth.}
	\end{equation}
	The proof of this fact was based on a comparison of the eigenvalues of $H_\alpha$
	with the eigenvalues of Schr\"odinger operators with $\delta$-potentials supported by $\Sigma$. Namely, for $\beta\in\rr$
	and a simple Lipschitz curve $\Gamma\subset\RR^2$, either closed or with a controlled behavior at infinity (which holds in all subsequent examples), consider
	the sesquilinear form $q_\beta^{\Gamma}$ given by
	\begin{align}\label{schrodinger}
		q_\beta^\Gamma(f,f)= \int_{\rr^2} |\nabla f|^2\,\dd x +\beta \int_\Sigma |\gamma f|^2\, \dd\sigma, \quad \dom q_\beta^\Gamma= H^1(\rr^2),
	\end{align}
	which is closed and semibounded below and defines a unique self-adjoint operator $Q^\Gamma_\beta$ in $L^2(\RR^2)$, see \cite[Section 3.2]{bel}.
	We will be mainly interested in the case $\Gamma:=\Sigma$, and we abbreviate
	\[
	q_\beta:=q^\Sigma_\beta,
	\qquad
	Q_\beta:=Q^\Sigma_\beta,
	\]
	but some other auxiliary curves $\Gamma$ will be used at various intermediate steps of analysis below.
	
	As $\Sigma$ is compact, one has $\spece Q_\beta=[0,\infty)$ and $\specd Q_\beta$ is finite, and
	for any $\lambda<0$ one has the equivalence
	\begin{equation} \label{BirS}
		\dim\ker (Q_\beta-\lambda)=\dim\ker \big( 1 + \beta S(\lambda)\big),
	\end{equation}
	in particular,
	\[
	\lambda\in \specd Q_\beta \iff 0\in  \specd \big(1 + \beta S(\lambda)\big),
	\]
	see \cite[Lemma 2.3 and Theorem 4.2]{BEKS}. By taking the ordering into account
	one concludes that for any $\lambda<0$, $\beta<0$ and $n\in\NN$ one has the equivalence
	\[
	\lambda=\mu_n(Q_\beta) \iff -\dfrac{1}{\beta}=\mu_n\big(S(\lambda)\big).
	\]

	Therefore, the discrete spectra of both $H_\alpha$ and $Q_\beta$ are closely related
	to each other, as both can be determined with the help of the eigenvalues of the integral operator $S(\lambda)$. The derivation of \eqref{lnha} in \cite{bhs} was based on the analysis of $Q_\beta$ for smooth $\Sigma$ in \cite{EY}: For each fixed $n\in\NN$ the operator
	$Q_\beta$ has at least $n$ negative eigenvalues for all sufficiently large negative $\beta$, and the $n$th eigenvalue $\mu_n(Q_\beta)$,
	if enumerated in the non-decreasing order with multiplicities taken into account,  behaves as
	\begin{equation}
		\label{mnq}
		\mu_n(Q_\beta)=-\frac{1}{4}\beta^2+O(1) \text{ for $\beta\to -\infty$, if $\Sigma$ is $C^4$-smooth.}
	\end{equation}
	Surprisingly, no similar results seem known for non-smooth $\Sigma$: While some conjectures
	can be found in the review \cite{leaky1}, we are not aware of any sufficient progress. In this section, we will
	prove some basic estimates for the eigenvalues of $Q^\Sigma_\beta$ with Lipschitz or piecewise smooth $\Sigma$,
	which will in turn produce new results for the eigenvalues of $H_\alpha$.

	\begin{proposition}\label{propas1} Let $0<a<b$ and $n\in\NN$ be such that $Q_\beta$ has at least $n$ negative eigenvalues for all sufficiently large negative $\beta$ and
		\begin{equation}
			\label{as1}
			-b\beta^2\le \mu_n(Q_\beta)\le -a\beta^2 \text{ for }\beta\to-\infty,
		\end{equation}
		then
		\begin{equation}
			\label{as2}
			-\frac{b}{a^2}\dfrac{1}{\alpha^2}\le \lambda_n(H_\alpha)\le -\frac{a}{b^2}\dfrac{1}{\alpha^2} \text{ for }\alpha\to 0^-.
		\end{equation}
	\end{proposition}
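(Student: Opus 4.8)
The plan is to reduce both $\lambda_n(H_\alpha)$ and $\mu_n(Q_\beta)$ to the single object that governs them — the eigenvalue $\mu_n\big(S(\lambda)\big)$ of the single layer operator — and to match the two resulting characterizations. First I would recall from Remark~\ref{rmk9} that for $\alpha<0$ the number $\lambda_n(H_\alpha)$ is the unique $\lambda<0$ with $\alpha\lambda\,\mu_n\big(S(\lambda)\big)=1$, that is, $\mu_n\big(S(\lambda_n(H_\alpha))\big)=1/\big(\alpha\,\lambda_n(H_\alpha)\big)$, and that $\lambda\mapsto\lambda\,\mu_n\big(S(\lambda)\big)$ is a continuous strictly increasing bijection of $(-\infty,0)$ onto itself; being the preimage of $1/\alpha\to-\infty$, one gets $\lambda_n(H_\alpha)\to-\infty$ as $\alpha\to0^-$. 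Similarly, by \eqref{BirS} and the equivalence following it, for $\beta<0$ such that $Q_\beta$ has at least $n$ negative eigenvalues, $\mu_n(Q_\beta)$ is the unique $\lambda<0$ with $\mu_n\big(S(\lambda)\big)=-1/\beta$; the continuity and strict monotonicity of $\mu_n\big(S(\cdot)\big)$ then make $\beta\mapsto\mu_n(Q_\beta)$ continuous and strictly increasing where it is defined, and \eqref{as1} shows it tends to $-\infty$ as $\beta\to-\infty$.

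The key step is the identity linking the two families. For $\alpha<0$ I would set $\beta_\alpha:=-\alpha\,\lambda_n(H_\alpha)<0$; then $\mu_n\big(S(\lambda_n(H_\alpha))\big)=1/\big(\alpha\lambda_n(H_\alpha)\big)=-1/\beta_\alpha$, which is precisely the equation characterizing $\mu_n(Q_{\beta_\alpha})$, whence $\lambda_n(H_\alpha)=\mu_n(Q_{\beta_\alpha})$. Since $\mu_n(Q_{\beta_\alpha})=\lambda_n(H_\alpha)\to-\infty$ as $\alpha\to0^-$ and $\beta\mapsto\mu_n(Q_\beta)$ is continuous, strictly increasing and tends to $-\infty$ only as $\beta\to-\infty$, it follows that $\beta_\alpha\to-\infty$. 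Consequently, for $\alpha$ close enough to $0^-$ the hypothesis \eqref{as1} is applicable at $\beta=\beta_\alpha$, giving $-b\,\beta_\alpha^{2}\le\lambda_n(H_\alpha)\le-a\,\beta_\alpha^{2}$; inserting $\beta_\alpha^{2}=\alpha^{2}\lambda_n(H_\alpha)^{2}$ and dividing by $\lambda_n(H_\alpha)<0$ (which reverses the inequalities) yields $-b\,\alpha^{2}\lambda_n(H_\alpha)\ge1\ge-a\,\alpha^{2}\lambda_n(H_\alpha)$, i.e. $-1/(a\,\alpha^{2})\le\lambda_n(H_\alpha)\le-1/(b\,\alpha^{2})$; since $0<a<b$ implies $1/a\le b/a^{2}$ and $a/b^{2}\le1/b$, this gives \eqref{as2} (in fact a little more).

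I expect the only non-routine point to be the verification that $\beta_\alpha\to-\infty$, since this is exactly what makes the asymptotic hypothesis \eqref{as1} legitimately invokable at $\beta=\beta_\alpha$; everything else is elementary inequality manipulation, the one subtlety being the sign reversals caused by dividing through by the negative quantities $\lambda_n(H_\alpha)$, $-a\alpha^{2}$, $-b\alpha^{2}$. As a variant that sidesteps the auxiliary parameter, one could instead eliminate $\beta$ between $\mu_n(Q_\beta)=\lambda$ and $\mu_n\big(S(\lambda)\big)=-1/\beta$ to rewrite \eqref{as1} as $-b\le\lambda\,\mu_n\big(S(\lambda)\big)^{2}\le-a$ for all sufficiently negative $\lambda$, and then combine this with $\mu_n\big(S(\lambda_n(H_\alpha))\big)=1/\big(\alpha\lambda_n(H_\alpha)\big)$ and $\lambda_n(H_\alpha)\to-\infty$ to reach the same conclusion directly.
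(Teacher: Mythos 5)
Your argument is correct, but it is genuinely different from the paper's. The paper never introduces the $\alpha$-dependent coupling $\beta_\alpha$; instead it works with the quantity $\lambda\,\mu_n\big(S(\lambda)\big)$: from $\mu_n\big(S(\mu_n(Q_\beta))\big)=-1/\beta$ and \eqref{as1} it derives the sandwich $b\beta\le \mu_n(Q_\beta)\,\mu_n\big(S(\mu_n(Q_\beta))\big)\le a\beta$, recognizes the two endpoints as the values of $\lambda\,\mu_n\big(S(\lambda)\big)$ at $\lambda_n(H_{\frac{1}{b\beta}})$ and $\lambda_n(H_{\frac{1}{a\beta}})$, uses the strict monotonicity of $\lambda\mapsto\lambda\,\mu_n\big(S(\lambda)\big)$ to conclude $\lambda_n(H_{\frac{1}{b\beta}})\le\mu_n(Q_\beta)\le\lambda_n(H_{\frac{1}{a\beta}})$ as in \eqref{ccp2}, and then substitutes $\beta=\frac{1}{b\alpha}$ and $\beta=\frac{1}{a\alpha}$ to obtain \eqref{as2}; no claim about $\lambda_n(H_\alpha)\to-\infty$ or about the behavior of $\beta\mapsto\mu_n(Q_\beta)$ is ever needed. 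You instead establish the exact identity $\lambda_n(H_\alpha)=\mu_n(Q_{\beta_\alpha})$ with $\beta_\alpha=-\alpha\,\lambda_n(H_\alpha)$ (legitimate, by the stated Birman--Schwinger equivalence together with Remark \ref{rmk9}), which forces you to verify $\beta_\alpha\to-\infty$ before \eqref{as1} may be invoked; your justification of that step is acceptable, though the cleanest phrasing is that $\Lambda_n(Q_\beta)$ is non-decreasing in $\beta$ by the min-max principle and finite for each fixed $\beta$, so $\mu_n(Q_{\beta_\alpha})=\lambda_n(H_\alpha)\to-\infty$ rules out any subsequence of $\beta_\alpha$ bounded away from $-\infty$. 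What your route buys is a sharper conclusion, $-\frac{1}{a\alpha^2}\le\lambda_n(H_\alpha)\le-\frac{1}{b\alpha^2}$, which indeed implies \eqref{as2} since $\frac1a\le\frac{b}{a^2}$ and $\frac{a}{b^2}\le\frac1b$, and which would give Corollary \ref{corol01} with a slightly simpler $\varepsilon$-bookkeeping; what the paper's route buys is that it bypasses the limit argument for $\beta_\alpha$ entirely, at the price of the weaker constants $\frac{b}{a^2}$ and $\frac{a}{b^2}$.
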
	
	
	\begin{proof}
		We have $\mu_n\big( S\big(\mu_n(Q_\beta)\big)\Big)=-\frac{1}{\beta}$, therefore,
		\[
		\mu_n(Q_\beta)\mu_n\big( S\big(\mu_n(Q_\beta)\big)\Big)=-\frac{\mu_n(Q_\beta)}{\beta},
		\]
		and in view of \eqref{as1} we have
		\begin{equation}
			\label{ccp}
			b\beta\le \mu_n(Q_\beta)\mu_n\big( S\big(\mu_n(Q_\beta)\big)\Big)\le a\beta.
		\end{equation}
		
		Furthermore (see Remark \ref{rmk9}),
		\begin{align*}
			a\beta&=\dfrac{1}{\frac{1}{a\beta}}=\lambda_n(H_{\frac{1}{a\beta}})\mu_n\Big( S\big(\lambda_n(H_{\frac{1}{a\beta}})\big)\Big),\\
			b\beta&=\dfrac{1}{\frac{1}{b\beta}}=\lambda_n(H_{\frac{1}{b\beta}})\mu_n\Big( S\big(\lambda_n(H_{\frac{1}{b\beta}})\big)\Big),
		\end{align*}
		and the substitution into \eqref{ccp} gives
		\begin{align*}
			\lambda_n(H_{\frac{1}{b\beta}})\mu_n\Big( S\big(\lambda_n(H_{\frac{1}{b\beta}})\big)\Big)&\le \mu_n(Q_\beta)\mu_n\big( S\big(\mu_n(Q_\beta)\big)\Big)\\
			&\le \lambda_n(H_{\frac{1}{a\beta}})\mu_n\Big( S\big(\lambda_n(H_{\frac{1}{a\beta}})\big)\Big).
		\end{align*}
		As the function $(-\infty,0)\ni\lambda\mapsto \lambda \mu_n\big(S(\lambda)\big)$ is increasing, it follows
		\begin{equation}
			\label{ccp2}
			\lambda_n(H_{\frac{1}{b\beta}})\le \mu_n(Q_\beta)\le \lambda_n(H_{\frac{1}{a\beta}}) \text{ as }\beta\to-\infty.
		\end{equation}
		The reparametrization $\beta:=\frac{1}{b\alpha}$ with $\alpha\to 0^-$ in the left-hand side of \eqref{ccp2} gives
		\[
		\lambda_n(H_{\alpha})\le \mu_n(Q_{\frac{1}{b\alpha}})\stackrel{\eqref{as1}}{\le} -\dfrac{a}{b^2}\,\dfrac{1}{\alpha^2}.
		\]
		Similarly, the reparametrization $\beta:=\frac{1}{a\alpha}$ with $\alpha\to 0^-$ in the right-hand side of \eqref{ccp2} gives
		\[
		\lambda_n(H_{\alpha})\ge \mu_n(Q_{\frac{1}{a\alpha}})\stackrel{\eqref{as1}}{\ge} -\dfrac{b}{a^2}\,\dfrac{1}{\alpha^2}.\qedhere
		\]	
	\end{proof}
	
	\begin{corollary}\label{corol01}
		If for some $b>0$ and $n\in\NN$ one has
		\[
		\mu_n(Q_\beta)=-b\beta^2+o(\beta^2) \text{ for }\beta\to-\infty,
		\]
		then
		\[
		\lambda_n(H_\alpha)=-\frac{1}{b\alpha^2}+o\big(\dfrac{1}{\alpha^2}\big) \text{ for }\alpha\to 0^-.
		\]
	\end{corollary}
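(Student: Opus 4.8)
The plan is to deduce the corollary directly from Proposition~\ref{propas1} by a squeezing argument in which the two constants playing the roles of $a$ and $b$ in that proposition are both taken arbitrarily close to the value $b$ appearing in the hypothesis.

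First I would fix an arbitrary $\ep\in(0,b)$. The assumption $\mu_n(Q_\beta)=-b\beta^2+o(\beta^2)$ as $\beta\to-\infty$ means precisely that for all sufficiently large negative $\beta$ the operator $Q_\beta$ has at least $n$ negative eigenvalues and
\[
-(b+\ep)\beta^2\le \mu_n(Q_\beta)\le -(b-\ep)\beta^2,
\]
so the hypotheses of Proposition~\ref{propas1} are satisfied with the pair $(a,b)$ there replaced by $(b-\ep,\,b+\ep)$, and $0<b-\ep<b+\ep$. That proposition then yields
\[
-\frac{b+\ep}{(b-\ep)^2}\,\frac{1}{\alpha^2}\le \lambda_n(H_\alpha)\le -\frac{b-\ep}{(b+\ep)^2}\,\frac{1}{\alpha^2}\quad\text{for }\alpha\to 0^-.
\]
Multiplying through by $\alpha^2>0$ and adding $1/b$ gives, for all sufficiently small $|\alpha|$ with $\alpha<0$,
\[
\frac{1}{b}-\frac{b+\ep}{(b-\ep)^2}\le \alpha^2\lambda_n(H_\alpha)+\frac{1}{b}\le \frac{1}{b}-\frac{b-\ep}{(b+\ep)^2}.
\]

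To finish I would let $\ep\to 0^+$: both outer expressions tend to $\frac1b-\frac1b=0$, so for every $\delta>0$ one can choose $\ep$ making them lie in $[-\delta,\delta]$, and hence $|\alpha^2\lambda_n(H_\alpha)+\frac1b|\le\delta$ for all $\alpha<0$ close enough to $0$. As $\delta>0$ was arbitrary, $\alpha^2\lambda_n(H_\alpha)+\frac1b\to 0$, which is exactly $\lambda_n(H_\alpha)=-\frac{1}{b\alpha^2}+o\big(\frac{1}{\alpha^2}\big)$. I do not expect a genuine obstacle here: all the analytic substance — the identification of $\specd H_\alpha$ and $\specd Q_\beta$ through the eigenvalues of $S(\lambda)$, the monotonicity of $\lambda\mapsto\lambda\mu_n(S(\lambda))$, and the two-sided transfer of asymptotics — is already contained in Proposition~\ref{propas1}. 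The only point needing care is bookkeeping: because of the reciprocal dependence $\alpha=1/\beta$, the \emph{upper} asymptotic constant for $\mu_n(Q_\beta)$ governs the \emph{lower} asymptotic constant for $\lambda_n(H_\alpha)$ and vice versa, so one must insert $b-\ep$ into the slot of ``$a$'' and $b+\ep$ into the slot of ``$b$'' before sending $\ep\to0$.
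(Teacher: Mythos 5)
Your argument is correct and is essentially the paper's own proof: the paper likewise squeezes via Proposition~\ref{propas1}, taking the two constants as $(1-\ep)b$ and $(1+\ep)b$ (multiplicative rather than your additive $b\mp\ep$, an immaterial difference) and letting $\ep\to0$. Your remark about the reciprocal bookkeeping — the upper constant for $\mu_n(Q_\beta)$ controlling the lower bound for $\lambda_n(H_\alpha)$ and vice versa — is exactly the point handled by the form of the bounds in Proposition~\ref{propas1}.
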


	\begin{proof}
		For any $\varepsilon\in(0,1)$ one has
		\[
		-(1+\varepsilon)b\beta^2\le\mu_n(Q_\beta)=-(1-\varepsilon)b\beta^2 \text{ for }\beta\to-\infty,
		\]
		and	Proposition~\ref{propas1} shows that for $\alpha\to 0^-$ there holds
		\[
		-\dfrac{1+\varepsilon}{(1-\varepsilon)^2}\cdot\dfrac{1}{b\alpha^2}
		\le
		\lambda_n(H_\alpha)\le-\dfrac{1-\varepsilon}{(1+\varepsilon)^2}\cdot\dfrac{1}{ b\alpha^2}.
		\]
		As $\varepsilon$ can be chosen arbitrarily small, the claim follows.
	\end{proof}	
	
	\begin{remark}
		By combining the last corollary with \eqref{mnq} we see that for smooth $\Sigma$ one has
		$\lambda_n(H_\alpha)=4\alpha^{-2}+o(\alpha^{-2})$ for $\alpha\to0^-$, which is the result of \cite{bhs} with a weaker remainder estimate (which can be easily improved by a small refinement of the above constructions).
		
		We are now going to show that the dependence of the eigenvalues of $Q_\beta$ on $\beta$ can be different from \eqref{mnq}, which in turn gives a different asymptotic for $\lambda_n(H_\alpha)$.
	\end{remark}

	For the analysis let us recall the min-max principle. If $Q$ is lower semibounded  self-adjoint operator in an infinite-dimensional Hilbert space $\mathcal{H}$, generated by a closed sesquilinear form $q$, then for any $n\in\NN$ one defines
	\[
	\Lambda_n(Q):=\inf_{\substack{F\subset\dom q\\ \dim F=n}}\sup_{f\in F\setminus\{0\}}\dfrac{q(f,f)}{\|f\|^2}.
	\]
	It is known that $n\mapsto \Lambda_n(Q)$ is non-decreasing with
	\[
	\lim\limits_{n\to\infty}\Lambda_n(Q)=\inf\spece Q\quad \text{($=\infty$ for $\spece Q=\emptyset$)},
	\]
	and for any $n\in\NN$ with $\Lambda_n(Q)<\inf\spece Q$ the number $\Lambda_n(Q)$ is the $n$-th eigenvalue of $Q$ if counted in the non-decreasing order with multiplicities taken into account, see \cite[Section~XIII.1]{RS}.

	\begin{proposition}\label{exieig}
		For any $n\in\nn$ there are $\beta_n<0$ and $0<a<b$ such that for all $\beta<\beta_n$ the operator $Q_\beta$
		has at least $n$ negative eigenvalues (if counted with multiplicities), and
		\begin{equation}
			\label{aan}
			-b\beta^2\le \mu_n(Q_\beta)\le -a \beta^2 \text{ for all } \beta<\beta_n.
		\end{equation}
	\end{proposition}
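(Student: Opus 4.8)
The plan is to apply the min-max principle recalled above. An upper bound $\Lambda_n(Q_\beta)\le -a\beta^2$, valid for all sufficiently large negative $\beta$, will by that principle simultaneously show that $Q_\beta$ has at least $n$ eigenvalues below $\inf\spece Q_\beta=0$ and that $\mu_n(Q_\beta)=\Lambda_n(Q_\beta)\le -a\beta^2$; a lower bound $Q_\beta\ge -b\beta^2$ then gives $\mu_n(Q_\beta)\ge -b\beta^2$. Since necessarily $-b\beta^2\le\mu_n(Q_\beta)\le -a\beta^2<0$, one has $a\le b$ automatically, and after enlarging $b$ slightly we obtain $0<a<b$. It thus remains to produce the two bounds, with constants $a,b>0$ independent of $\beta$.

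For the lower bound I would invoke the standard scaling-sharp trace inequality for the compact Lipschitz curve $\Sigma$: there is $C_\Sigma>0$ such that $\|\gamma f\|_{L^2(\Sigma)}^2\le\varepsilon\|\nabla f\|_{L^2(\RR^2)}^2+C_\Sigma\varepsilon^{-1}\|f\|_{L^2(\RR^2)}^2$ for all $f\in H^1(\RR^2)$ and $\varepsilon\in(0,1]$. This is the estimate behind the semiboundedness of $q_\beta$ (cf.\ \cite[Section~3.2]{bel}); in the stated form it follows, e.g., by applying the divergence theorem in $\Omega_+$ to $|f|^2V$ for a fixed smooth vector field $V$ transversal to $\Sigma$ and using Young's inequality. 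Inserting $\varepsilon=1/|\beta|$ (admissible once $|\beta|\ge1$) into $q_\beta(f,f)=\|\nabla f\|_{L^2(\RR^2)}^2+\beta\|\gamma f\|_{L^2(\Sigma)}^2$ yields $q_\beta(f,f)\ge -C_\Sigma\beta^2\|f\|_{L^2(\RR^2)}^2$, hence $Q_\beta\ge -C_\Sigma\beta^2$ and $\mu_n(Q_\beta)\ge -b\beta^2$ with $b:=C_\Sigma$.

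For the upper bound I would build an $n$-dimensional trial space out of functions localized near $n$ distinct points of $\Sigma$. Pick pairwise distinct $p_1,\dots,p_n\in\Sigma$ and pairwise disjoint balls $B_j\ni p_j$ small enough that, in rotated coordinates $(s,t)$ centred at $p_j$, the set $\Sigma\cap B_j$ is the graph $\{(s,\omega_j(s)):|s|<\delta_j\}$ of a Lipschitz function $\omega_j$ with $\omega_j(0)=0$ and with a common Lipschitz bound $L$. For $\beta<0$ set $\mu:=\kappa|\beta|$ with $\kappa=\kappa(L)>0$ to be fixed below, and let $\psi_j(s,t):=\chi_j(s)\,e^{-\mu|t-\omega_j(s)|}\,\zeta(t-\omega_j(s))$ with fixed cutoffs $\chi_j\in C_0^\infty((-\delta_j,\delta_j))$ and $\zeta\in C_0^\infty$ equal to $1$ near the origin, chosen so that $\psi_j\in H^1(\RR^2)$ is supported in $B_j$ and meets $\Sigma$ only along $\Sigma\cap B_j$, where $\gamma\psi_j$ equals $\chi_j$. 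Exploiting the translation invariance of the exponential weight in $t$ together with $|\omega_j'|\le L$, a direct computation gives, for $|\beta|$ large, $\|\psi_j\|_{L^2(\RR^2)}^2=\mu^{-1}\|\chi_j\|^2\,(1-o(1))$, $\|\gamma\psi_j\|_{L^2(\Sigma)}^2\ge\|\chi_j\|^2$, and $\|\nabla\psi_j\|_{L^2(\RR^2)}^2\le 2\mu^{-1}\|\chi_j'\|^2+C(L)\,\mu\,\|\chi_j\|^2$, the norms of $\chi_j$ and $\chi_j'$ being taken in the variable $s$. Choosing $\kappa$ so small that $C(L)\mu\le\tfrac12|\beta|$ makes the dangerous gradient contribution strictly dominated by $\beta\|\gamma\psi_j\|_{L^2(\Sigma)}^2=-|\beta|\|\chi_j\|^2$; hence for all $\beta$ below some threshold $\beta_n<0$ one has $q_\beta(\psi_j,\psi_j)\le -\tfrac14|\beta|\|\chi_j\|^2$ and therefore $q_\beta(\psi_j,\psi_j)\le -a\beta^2\|\psi_j\|_{L^2(\RR^2)}^2$ with a constant $a=a(L)>0$ not depending on $j$. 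Since the $\psi_j$ have pairwise disjoint supports, $q_\beta$ and the $L^2$ inner product are block-diagonal with respect to the decomposition of their linear span $F$ into the lines $\CC\psi_j$, so $\sup_{f\in F\setminus\{0\}}q_\beta(f,f)/\|f\|_{L^2(\RR^2)}^2\le -a\beta^2$. By the min-max principle $\Lambda_n(Q_\beta)\le -a\beta^2<0=\inf\spece Q_\beta$, which gives at least $n$ negative eigenvalues of $Q_\beta$ and $\mu_n(Q_\beta)\le -a\beta^2$ for every $\beta<\beta_n$, completing the argument together with the lower bound.

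The only genuinely delicate point is this last construction: one has to check that replacing a straight segment by the Lipschitz graph $\omega_j$ does not spoil the trial-function estimate. It does not, because $\omega_j'\in L^\infty$ enters only through the bounded factor $C(L)$ in front of the gradient term, and because — in contrast to the smooth case, where one aims for the sharp rate $-\tfrac14\beta^2$ coming from the one-dimensional $\delta$-interaction — here any negative rate $-a\beta^2$ suffices. Equivalently, one could flatten $\Sigma\cap B_j$ by a bi-Lipschitz map, pull back the one-dimensional profile $e^{-\mu|t|}$, and use that bi-Lipschitz changes of variables distort the $L^2$, $H^1$, and arc-length norms only by bounded factors; either way no regularity of $\Sigma$ beyond Lipschitz is used.
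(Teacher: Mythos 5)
Your proposal is correct and follows essentially the same route as the paper: the lower bound comes from the $\varepsilon$-trace inequality with $\varepsilon\sim|\beta|^{-1}$, and the upper bound from an $n$-dimensional trial space of disjointly supported functions built in local Lipschitz graph coordinates, with a fixed transversal profile squeezed to scale $\sim|\beta|^{-1}$ along the graph, combined via the min-max principle. The only cosmetic difference is the choice of profile (your cut-off exponential $e^{-\mu|t-\omega_j(s)|}$ versus the paper's compressed bump $g(y_1,\theta|\beta|(y_2-h_j(y_1)))$), which does not change the argument.
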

	
	\begin{proof} The standard trace inequality, see e.g. \cite[Theorem~1.5.1.10]{Gris}, implies that there exists $c>0$ such that
		\begin{multline}\label{grsv1}
			\int_{\Sigma} |\gamma^+ f_+|^2\dd\sigma\le c\bigg(\varepsilon \int_{\Omega_+} |\nabla f_+|^2\dd x + \dfrac{1}{\varepsilon}\int_{\Omega_+} |f_+|^2\dd x\bigg)\\
			\text{ for all } f_+\in H^1(\Omega_+) \text{ and } \varepsilon\in(0,1).
		\end{multline}	
		For any $f\in H^1(\RR^2)$, $\beta<0$ and $\varepsilon\in(0,1)$ we obtain then
		\begin{align*}
			q_\beta(f,f)&=\int_{\Omega_+}|\nabla f_+|^2\, \dd x+\int_{\Omega_-}|\nabla f_-|^2\, \dd x-|\beta|\int_{\Sigma}|\gamma^+ f_+|^2\, \dd \sigma\\
			&\ge \int_{\Omega_+}|\nabla f_+|^2\, \dd x-|\beta| c\bigg(\varepsilon \int_{\Omega_+} |\nabla f_+|^2\dd x + \dfrac{1}{\varepsilon}\int_{\Omega_+} |f_+|^2\dd x\bigg)\\
			&=(1-|\beta| c \varepsilon)\int_{\Omega_+}|\nabla f_+|^2\, \dd x-\dfrac{|\beta| c}{\varepsilon}\int_{\Omega_+} |f_+|^2\dd x.\bigg.
		\end{align*}
		The above inequalities are valid, in particular, for any $\beta<-c^{-1}$ and $\varepsilon:=(-c\beta)^{-1}$. In that case $1+\beta c\varepsilon=0$, and
		\[
		q_\beta(f,f)\ge -(\beta c)^2\int_{\Omega_+} |f_+|^2\dd x\ge -c^2\beta^2 \|f\|^2_{L^2(\RR^2)},
		\]
		showing
		\begin{equation}
			\label{l1q}
			\Lambda_1(Q_\beta)\ge -c^2\beta^2 \text{ for all }\beta< -c^{-1}.
		\end{equation}

		Let $n\in\NN$. Pick $n$ distinct points $p_1,\dots,p_n\in \Sigma$.
		Let $j\in\{1,\dots,n\}$, then there exist constants $A,B>0$ and a Lipschitz function $h_j:(-A,A)\to (-\frac{B}{2},\frac{B}{2})$
		and an orthogonal coordinate change
		\[
		\Phi_j: (-A,A)\times(-B,B)\ni (y_1,y_2)\mapsto \RR^2,\quad \Phi_j(0,0)=p_j,
		\]
		such that
		\begin{multline*}
			\Phi_j\big((-A,A)\times(-B,B)\big)\cap \Sigma\\
			=\Phi_j\big(\big\{(y_1,y_2):\, y_1\in(-A,A),\, y_2=h_j(y_1)\big\}\big),
		\end{multline*}
		see \cite[Definition 1.2.1.1]{Gris}.
		By taking sufficiently small $A$ one may additionally assume that the $n$ rectangles $V_j:=\Phi_j\big((-A,A)\times (-B,B)\big)$, $j\in\{1,\dots,n\}$, are mutually disjoint and $A,B$ are independent of $j$, and denote
		$M:=\max_j \|h'_j\|_\infty$.
		Let $g\in C^\infty_c(\RR^2)$ with $\supp g \subset (-A,A)\times(-\frac{B}{2},\frac{B}{2})$,  $g\ge 0$ and $g(0,0)=1$.
		Let $\theta\in(0,1)$ (a precise value will be chosen later) and for all negative $\beta$ with $|\beta|>\theta^{-1}$ define $f_j:\RR^2\to \CC$
		such that $\supp f_j\subset V_j$ and
		\[
		f_j\big(\Phi(y_1,y_2)\big)=g\Big(y_1,\theta|\beta|\big(y_2-h_j(y_1)\big)\Big),\  (y_1,y_2)\in(-A,A)\times(-B,B).
		\]
		As $f_j$ is Lipschitz with compact support, one has $f_j\in H^1(\RR^2)$, and
		\begin{align*}
			\int_{\RR^2}|f_j|^2\,\dd x&=\int_{V_j}|f_j|^2\,\dd x=\int_{(-A,A)\times(-B,B)} \big|(f_j\circ \Phi_j)\big|^2\,\dd y\\
			&=\int_{-A}^{A} \int_\RR \Big|g\Big(y_1,\theta|\beta|\big(y_2-h_j(y_1)\big)\Big)\Big|^2\, dy_2\, \dd y_1\\
			&= (\theta|\beta|)^{-1}\int_{-A}^{A} \int_\RR \big|g(y_1,z)\big|^2\, \dd z\, \dd y_1=(\theta |\beta|)^{-1}\|g\|_{L^2(\RR^2)}^2,\\
			\int_\Sigma |f_j|^2\, \dd\sigma&=\int_{-A}^A \big|g(y_1,0)\Big|^2\sqrt{1+|h'_j(y_1)|^2}\,\dd y_1\ge \|g(\cdot,0)\big\|^2_{L^2(-A,A)}.
		\end{align*}	
		We further have
		\begin{align*}	
			\big|\nabla (f_j\circ \Phi_j(y)\big|^2&=\Big|\partial_1 g\Big(y_1,\theta|\beta|\big(y_2-h_j(y_1)\big)\Big)\\
			&\qquad-\theta|\beta| h'_j(y_1)\partial_2 g\Big(y_1,\theta|\beta|\big(y_2-h_j(y_1)\big)\Big)\Big|^2\\
			&\qquad +
			\Big|\theta|\beta| \partial_2 g\Big(y_1,\theta|\beta|\big(y_2-h_j(y_1)\big)\Big|^2\\
			&\le 4(M^2+1)(\theta\beta)^2 \Big|\nabla g\Big(y_1,\theta|\beta|\big(y_2-h_j(y_1)\big)\Big)\Big|^2,
		\end{align*}
		therefore,
		\begin{align*}
			\int_{\RR^2}&|\nabla f_j|^2\,\dd x=\int_{V_j}|\nabla f_j|^2\, \dd x=\int_{(-A,A)\times(-B,B)} \big|\nabla (f_j\circ \Phi_j)\big|^2\,\dd y\\
			&\le 4(M^2+1)(\theta\beta)^2\int_{(-A,A)\times(-B,B)}\Big|\nabla g\Big(y_1,\theta|\beta|\big(y_2-h_j(y_1)\big)\Big)\Big|^2\,\dd y\\
			&\le 4(M^2+1)(\theta\beta)^2 \int_{-A}^{A} \int_\RR \Big|\nabla g\Big(y_1,\theta|\beta|\big(y_2-h_j(y_1)\big)\Big)\Big|^2\, \dd y_2\, \dd y_1\\
			&\le 4(M^2+1)(\theta\beta)^2\cdot (\theta|\beta|)^{-1}\int_{-A}^{A} \int_\RR \big|\nabla g(y_1,z)\big|^2\, \dd z\, \dd y_1\\
			&= 4(M^2+1)\theta|\beta|\|\nabla g\|_{L^2(\RR^2)}^2.
		\end{align*}
		It follows
		\begin{align*}
			\dfrac{q_\beta(f_j,f_j)}{\|f_j\|^2_{L^2(\RR^2)}}&\le \dfrac{4(M^2+1)\theta|\beta|\|\nabla g\|_{L^2(\RR^2)}^2 - |\beta| \|g(\cdot,0)\big\|^2_{L^2(-A,A)}}{(\theta|\beta|)^{-1}\|g\|_{L^2(\RR^2)}^2}=c_\theta \beta^2,\\
			c_\theta&:=\dfrac{4(M^2+1)\theta^2\|\nabla g\|_{L^2(\RR^2)}^2-\theta\|g(\cdot,0)\big\|^2_{L^2(-A,A)}}{\|g\|_{L^2(\RR^2)}^2},
		\end{align*}
		and we can choose $\theta>0$ sufficiently small to obtain $c_\theta<0$. Remark that $c_\theta$ is independent of $j$.
		
		Let $F:=\mathop{\mathrm{span}}\{f_1,\dots,f_n\}$.
		As $f_1,\dots,f_n$ have mutually disjoint support, for any $f=b_1f_1+\dots+b_n f_n\in F$ with $b_j\in\CC$ we have
		\begin{align*}
			q_\beta(f,f)&=\sum_{j=1}^n |b_j|^2q_\beta(f_j,f_j)\le c_\theta\beta^2\sum_{j=1}^n |b_j|^2  \|f_j\|^2_{L^2(\RR^2)}
			=c_\theta\beta^2\|f\|^2_{L^2(\RR^2)},
		\end{align*}
		hence,
		\[
		\Lambda_n(Q_\beta)\le \sup_{f\in F\setminus\{0\}}\dfrac{q_\beta(f,f)}{\|f\|^2_{L^2(\RR^2)}}\le c_\theta \beta^2<0=\inf\spece Q_\beta.
		\]
		This implies that $\Lambda_j(Q_\beta)=\mu_n(Q_\beta)$ for all $j\in\{1,\dots,n\}$,
		and by combining with \eqref{l1q} we obtain \eqref{aan} with $a:=-c_\theta$ and $b:=c^2$.
	\end{proof}
	
	By combining Propositions \ref{propas1} and \ref{exieig} we arrive at the following two-sided estimate:
	
	\begin{corollary}
		For any $n\in\NN$ there are $0<A<B$ such that
		\[
		-\dfrac{B}{\alpha^2}\le 
		\lambda_n(H_\alpha)\le-\dfrac{A}{\alpha^2} \text{ for }\alpha\to 0^-.
		\]
	\end{corollary}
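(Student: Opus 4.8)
The plan is simply to chain together the two preceding propositions. First I would invoke Proposition~\ref{exieig} with the given $n\in\NN$: it produces $\beta_n<0$ and constants $0<a<b$ such that $Q_\beta$ has at least $n$ negative eigenvalues for every $\beta<\beta_n$ and
\[
-b\beta^2\le \mu_n(Q_\beta)\le -a\beta^2\quad\text{for all }\beta<\beta_n,
\]
so in particular the hypothesis \eqref{as1} of Proposition~\ref{propas1} holds as $\beta\to-\infty$ with exactly these $a,b,n$.

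Next I would apply Proposition~\ref{propas1} verbatim with this triple $(a,b,n)$. Its conclusion \eqref{as2} reads
\[
-\frac{b}{a^2}\,\frac{1}{\alpha^2}\le \lambda_n(H_\alpha)\le-\frac{a}{b^2}\,\frac{1}{\alpha^2}\quad\text{for }\alpha\to 0^-,
\]
and it only remains to set $A:=a/b^2$ and $B:=b/a^2$. Both are positive, and $A<B$ because $a/b^2<b/a^2\iff a^3<b^3\iff a<b$, which holds by the choice of $a,b$. This gives the claimed $0<A<B$ with $-B/\alpha^2\le\lambda_n(H_\alpha)\le-A/\alpha^2$ as $\alpha\to0^-$.

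Since both ingredients have already been established, there is no genuine obstacle left in this last step; the substance lies entirely in the two propositions being combined. The nontrivial work in Proposition~\ref{exieig} is the two-sided squeeze of $\mu_n(Q_\beta)$ between multiples of $\beta^2$: the lower bound comes from the Grisvard trace inequality \eqref{grsv1} on $\Omega_+$ with the balanced choice $\varepsilon=(-c\beta)^{-1}$, while the upper bound uses an $n$-dimensional trial space built from bump functions localised near $n$ distinct points of $\Sigma$, rescaled transversally by $\theta|\beta|$ and inserted into the min-max principle. The nontrivial work in Proposition~\ref{propas1} is the translation mechanism: the identity $\mu_n(Q_\beta)\mu_n\big(S(\mu_n(Q_\beta))\big)=-\mu_n(Q_\beta)/\beta$ together with the strict monotonicity of $\lambda\mapsto\lambda\mu_n(S(\lambda))$ from Remark~\ref{rmk9} converts the $\beta$-asymptotics of $\mu_n(Q_\beta)$ into the $\alpha$-asymptotics of $\lambda_n(H_\alpha)$. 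The only bookkeeping to watch is that the constants $a,b$ are carried consistently through both propositions and that the final $A,B$ inherit $0<A<B$.
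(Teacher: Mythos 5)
Your proposal is correct and follows exactly the paper's route: the corollary is stated there precisely as the combination of Proposition~\ref{exieig} (two-sided squeeze $-b\beta^2\le\mu_n(Q_\beta)\le-a\beta^2$) with Proposition~\ref{propas1}, yielding $A=a/b^2$ and $B=b/a^2$, and your check that $0<A<B$ follows from $a<b$ is the only bookkeeping needed.
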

	
	It is not clear if the result can be improved. In particular, we do not know if the assumption of Corollary \ref{corol01} holds at least for some $n$ for general Lipschitz $\Sigma$. Nevertheless, we are going to show that at least for some $\Sigma$ and $n=1$ the asymptotic \eqref{mnq} fails.
	
	\section{Analysis of a piecewise smooth curve with a corner}\label{sec4}	
	
	Let us pick $\theta\in(0,\frac{\pi}{2})$ and denote 
	\begin{align*}
		\Gamma_\theta:=\big\{(r\cos\omega,r\sin\omega)\in\rr^2:\, r\ge 0,\  |\omega|=\theta \big\},
	\end{align*}
	which is the union of two half-lines meeting at the origin with the angle $2\theta$ between them.
	For the associated operator $Q^\Gamma_\beta$ with $\beta<0$ it is known that
	\[
	\spece Q^{\Gamma_\theta}_\beta=[-\frac{\beta^2}{4},\infty),
	\quad
	\specd Q^{\Gamma_\theta}_\beta\ne \emptyset,
	\]
	in particular,
	\[
	-b_\theta:=\mu_1(Q^{\Gamma_\theta}_{-1})<-\frac{1}{4},\qquad
	\mu_1(Q^\Gamma_\beta)=-b_\theta\beta^2 \text{ for all }\beta<0, 
	\]
	see \cite{EY,KP}.
	
	\begin{theorem}\label{thm13}
		Let $\Sigma\subset\RR^2$ be a simple closed Lipschitz curve such that
		\begin{equation}
			\label{assump}
			\begin{minipage}{0.85\textwidth}
				\begin{itemize}
					\item[(i)] for some $r>0$ and $\theta\in(0,\frac{\pi}{2})$ there holds
					\[
					\Sigma\cap B_{2r}(0)=\Gamma_\theta\cap B_{2r}(0),
					\]
					\item[(ii)] $\Sigma$ is $C^4$ smooth at all points except at the origin,
				\end{itemize}
			\end{minipage}
		\end{equation}
		then
		\[
		\mu_1(Q^\Sigma_\beta)=-b_\theta\beta^2+O(1) \text{ for }\beta\to-\infty.
		\]	
	\end{theorem}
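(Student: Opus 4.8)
The plan is to prove the two-sided bound $-b_\theta\beta^2-C\le \mu_1(Q^\Sigma_\beta)\le -b_\theta\beta^2+o(1)$ as $\beta\to-\infty$ via the min-max principle, bracketing $Q^\Sigma_\beta$ between the scale-invariant model operator $Q^{\Gamma_\theta}_\beta$ near the origin and a $\delta$-interaction on a smooth closed curve away from the origin. The fact that makes the localization work is the strict inequality $b_\theta>\frac14$, i.e. that the corner produces a strictly deeper bound state than a smooth arc.

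For the upper bound I would use a truncated ground state of the model operator. Since $-b_\theta=\mu_1(Q^{\Gamma_\theta}_{-1})$ lies strictly below $\inf\spece Q^{\Gamma_\theta}_{-1}=-\frac14$, the corresponding ground state $u\in H^1(\RR^2)$ and its gradient decay exponentially (Combes--Thomas/Agmon estimates for eigenfunctions below the essential spectrum, available for $\delta$-interactions). As $\Gamma_\theta$ is a cone, $u_\beta:=u(|\beta|\,\cdot\,)$ is a ground state of $Q^{\Gamma_\theta}_\beta$ with eigenvalue $-b_\theta\beta^2$, concentrated at scale $1/|\beta|$ near the origin. Taking $\chi\in C^\infty_c(B_{2r}(0))$ with $\chi\equiv1$ on $B_r(0)$, assumption (i) ensures that $\chi u_\beta$ is supported where $\Sigma=\Gamma_\theta$, so $q^\Sigma_\beta(\chi u_\beta,\chi u_\beta)=q^{\Gamma_\theta}_\beta(\chi u_\beta,\chi u_\beta)$; expanding $\nabla(\chi u_\beta)$ and rescaling, every error term (those containing $\nabla\chi$, and the tails of the $L^2$- and of the curve integrals of $u_\beta$ outside $B_r(0)$) is $O(|\beta|^p e^{-cr|\beta|})$, hence negligible against $\|u_\beta\|^2_{L^2(\RR^2)}=|\beta|^{-2}\|u\|^2_{L^2(\RR^2)}$, so the Rayleigh quotient of $\chi u_\beta$ equals $-b_\theta\beta^2+o(1)$. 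Since $\Sigma$ is compact, $\spece Q^\Sigma_\beta=[0,\infty)$, so for $|\beta|$ large this value is the bottom eigenvalue, giving $\mu_1(Q^\Sigma_\beta)\le -b_\theta\beta^2+o(1)$.

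For the lower bound I would use an IMS localization. Fix $\chi_1,\chi_2\in C^\infty(\RR^2)$ with $\chi_1^2+\chi_2^2\equiv1$, $\chi_1\in C^\infty_c(B_{2r}(0))$, $\chi_1\equiv1$ on $B_r(0)$ and $\supp\chi_2\subset\RR^2\setminus B_r(0)$; then for every $f\in H^1(\RR^2)$,
\[
q^\Sigma_\beta(f,f)=q^\Sigma_\beta(\chi_1 f,\chi_1 f)+q^\Sigma_\beta(\chi_2 f,\chi_2 f)-\int_{\RR^2}\big(|\nabla\chi_1|^2+|\nabla\chi_2|^2\big)|f|^2\,\dd x,
\]
and the last term is bounded below by $-C\|f\|^2_{L^2(\RR^2)}$ with $C$ the $L^\infty$-norm of $|\nabla\chi_1|^2+|\nabla\chi_2|^2$. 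For the first term, $\chi_1 f$ is supported where $\Sigma=\Gamma_\theta$, so $q^\Sigma_\beta(\chi_1 f,\chi_1 f)=q^{\Gamma_\theta}_\beta(\chi_1 f,\chi_1 f)\ge -b_\theta\beta^2\|\chi_1 f\|^2_{L^2(\RR^2)}$ since $\inf\spec Q^{\Gamma_\theta}_\beta=-b_\theta\beta^2$. For the second term I would round the corner: by assumption (ii), $\Sigma$ is $C^4$ outside $B_r(0)$ and consists of two straight segments near $\partial B_r(0)$, so a Hermite interpolation replaces the corner inside $B_r(0)$ by a $C^4$ arc, yielding a simple closed $C^4$ curve $\widetilde\Gamma$ with $\widetilde\Gamma=\Sigma$ on $\RR^2\setminus B_r(0)$; since $\chi_2 f$ vanishes on $B_r(0)$, one has $q^\Sigma_\beta(\chi_2 f,\chi_2 f)=q^{\widetilde\Gamma}_\beta(\chi_2 f,\chi_2 f)\ge \mu_1(Q^{\widetilde\Gamma}_\beta)\|\chi_2 f\|^2_{L^2(\RR^2)}$, and \eqref{mnq} applied to $\widetilde\Gamma$ gives $\mu_1(Q^{\widetilde\Gamma}_\beta)=-\frac{\beta^2}{4}+O(1)\ge -b_\theta\beta^2$ for all sufficiently large negative $\beta$, because $b_\theta>\frac14$. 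Summing the three estimates and using $\chi_1^2+\chi_2^2\equiv1$ once more yields $q^\Sigma_\beta(f,f)\ge -(b_\theta\beta^2+C)\|f\|^2_{L^2(\RR^2)}$, hence $\mu_1(Q^\Sigma_\beta)\ge -b_\theta\beta^2-C$.

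Combining the two bounds gives $\mu_1(Q^\Sigma_\beta)=-b_\theta\beta^2+O(1)$ as $\beta\to-\infty$. The genuinely delicate point is the exponential decay of the corner ground state $u$ and of $\nabla u$, which legitimizes the truncation in the upper bound and makes the localized form near the origin reduce exactly to that of the scale-invariant model; the remaining ingredients — the scaling of $Q^{\Gamma_\theta}_\beta$, the IMS formula, the rounding construction, and the appeal to \eqref{mnq} — are routine.
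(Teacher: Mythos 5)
Your lower bound is essentially the paper's argument (IMS partition, the identity $q^\Sigma_\beta(\chi_1 f,\chi_1 f)=q^{\Gamma_\theta}_\beta(\chi_1 f,\chi_1 f)$ from assumption (i), rounding the corner to a $C^4$ curve for the $\chi_2$-piece, the smooth asymptotics \eqref{mnq}, and the strict inequality $b_\theta>\frac14$), and in fact it is slightly more direct: you compare the inner piece with the full broken-line operator $Q^{\Gamma_\theta}_\beta$, whereas the paper compares it with the Dirichlet-restricted operator $\Tilde Q^\Sigma_\beta$ on $B_{2r}(0)$. The real divergence is in the upper bound. The paper never touches the ground state of the corner model: it proves the two-sided bracket $\Lambda_1(\Tilde Q^{\Gamma}_\beta)-C\le\Lambda_1(Q^{\Gamma_\theta}_\beta)\le\Lambda_1(\Tilde Q^{\Gamma}_\beta)$ by running the same IMS argument for $\Gamma_\theta$ itself, deduces $\Lambda_1(\Tilde Q^{\Gamma}_\beta)=-b_\theta\beta^2+O(1)$ from the exactly known value $\Lambda_1(Q^{\Gamma_\theta}_\beta)=-b_\theta\beta^2$, and then uses $\Tilde Q^{\Gamma}_\beta=\Tilde Q^{\Sigma}_\beta$ together with $\Lambda_1(Q^\Sigma_\beta)\le\Lambda_1(\Tilde Q^\Sigma_\beta)$; only the min-max principle, the IMS formula and \eqref{mnq} are needed. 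You instead take the scaled ground state $u(|\beta|\cdot)$ of $Q^{\Gamma_\theta}_\beta$ (your scaling computation is correct since $\Gamma_\theta$ is a cone) and truncate it, which forces you to invoke exponential (Agmon-type) decay of $u$, of $\nabla u$ in $L^2$ outside balls, and of its trace on $\Gamma_\theta$. That decay is true and available in the leaky-wire literature, so your route works and even gives an $o(1)$ upper error, but it imports a nontrivial external ingredient that you assert rather than prove; if you wanted a self-contained argument you would either have to carry out the Agmon estimate for the form $q^{\Gamma_\theta}_{-1}$ (the trace term makes this not entirely routine) or switch to the paper's bracketing trick, which buys the same $O(1)$ conclusion with no decay estimates at all.
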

	
	\begin{proof}
		During the proof denote $\Gamma:=\Gamma_\theta$ and $b:=b_\theta$.
		Let $\chi_1,\chi_2\in C^\infty(\rr^2)$ be such that $0\leq \chi_j\leq 1$, $\chi_1^2+\chi_2^2=1$, with 
		\begin{align*}
			\chi_1(x)=1 \text{ for } |x|<r\quad \text{and}\quad \chi_2(x)=1 \text{ for } |x|>2r,
		\end{align*}
		and denote $C:=\| |\nabla \chi_1|^2 +|\nabla \chi_2|^2\|_{\infty}<\infty$. Then a direct computation shows that
		for any $f\in H^1(\RR^2)$ there holds
		\begin{align}\label{eq1}
			\begin{split}
				q_{\beta}^{\Sigma}(f,f)&=q_{\beta}^{\Sigma}(\chi_1 f,\chi_1 f) + q_{\beta}^{\Sigma}(\chi_2 f,\chi_2 f)\\
				&\qquad -\int_{\rr^2} ( |\nabla \chi_1|^2 +|\nabla \chi_2|^2) |f|^2 \ddd x\\
				&\geq q_{\beta}^{\Sigma}(\chi_1 f,\chi_1 f) + q_{\beta}^{\Sigma}(\chi_2 f,\chi_2 f) - C\| f\|^2_{L^2(\rr^2)}.
			\end{split}
		\end{align}
		
		Due to the assumption (ii) in \eqref{assump} by rounding the corner at the origin we can construct a simple closed $C^4$ smooth curve $\Sigma_0$ such that $\Sigma\cap \big(\rr^2 \setminus B_r(0)\big)= \Sigma_0\cap \big(\rr^2 \setminus B_r(0)\big)$, then obviously
		\begin{align}
			q_{\beta}^{\Sigma}(\chi_2 f,\chi_2 f) = q_{\beta}^{\Sigma_0}(\chi_2 f,\chi_2 f) \text{ for all } f\in H^1(\rr^2).
		\end{align}
		
		Furthermore, let $\Tilde{Q}_{\beta}^{\Sigma}$ be the self-adjoint operator in $L^2\big(B_{2r}(0)\big)$
		generated by the sesquilinear form $\tilde{q}_{\beta}^{\Sigma}$ given by
		\[
		\Tilde{q}_{\beta}^{\Sigma}(g,g) = \int_{B_{2r}(0)} |\nabla g|^2\ddd x + \beta \int_{B_{2r}(0)\cap\Sigma} |g|^2\dd\sigma 	
		\]
		on the domain $\dom \Tilde{q}_{\beta}^{\Sigma}=H^1_0\big(B_{2r}(0)\big)$. As each function in $\dom \Tilde{q}_{\beta}^{\Sigma}$
		can be extended by zero to a function in $\dom q^\Sigma_\beta$, we have by the min-max principle
		\begin{equation}
			\label{ll-01}
			\Lambda_1(Q^\Sigma_\beta)\le \Lambda_1(\Tilde{Q}_{\beta}^{\Sigma}),\quad \beta\in\RR.
		\end{equation}		
		For any $f\in H^1(\RR^2)$ one has $\chi_1 f\in H^1_0\big(B_{2r}(0)\big)$
		with
		\[
		q_{\beta}^{\Sigma}(\chi_1 f,\chi_1 f) = \Tilde{q}_{\beta}^{\Sigma}(\chi_1 f,\chi_2f).
		\]
		By combining these observations with \eqref{eq1} we obtain
		\[
		q_{\beta}^{\Sigma}(f,f)+ C\| f \|^2_{L^2(\rr^2)} \geq  \tilde{q}_{\beta}^{\Sigma}(\chi_1 f,\chi_1 f) + q_{\beta}^{\Sigma_0}(\chi_2 f,\chi_2 f),\ f\in H^1(\RR^2).
		\]
		The right-hand side is the sesquilinear form of the operator $\Tilde{Q}_{\beta}^{\Sigma}\oplus Q_\beta^{\Sigma_0}$
		computed on $\big((\chi_1 f,\chi_2 f),(\chi_1 f,\chi_2 f)\big)$ and that
		the map $f\mapsto (\chi_1 f,\chi_2 f)$ preserves the $L^2$-norm, so the min-max principle implies
		\[
		\Lambda_1(Q^\Sigma_\beta)-C\ge \Lambda_1(\Tilde{Q}_{\beta}^{\Sigma}\oplus Q_\beta^{\Sigma_0}),\quad \beta\in\RR.
		\]
		and we have $\Lambda_1(\Tilde{Q}_{\beta}^{\Sigma}\oplus Q_\beta^{\Sigma_0})=\min\Big\{\Lambda_1(\Tilde{Q}_{\beta}^{\Sigma}),\Lambda_1(Q_\beta^{\Sigma_0})\Big\}$,
		so in combination with \eqref{ll-01}
		we arrive at
		\begin{equation}
			\label{eq3}
			\min\Big\{\Lambda_1(\Tilde{Q}_{\beta}^{\Sigma}),\Lambda_1(Q_\beta^{\Sigma_0})\Big\}-C\le
			\Lambda_1(Q_{\beta}^{\Sigma})\le \Lambda_1(\Tilde{Q}^\Sigma_\beta),\quad \beta\in\RR.
		\end{equation}
		
		The same argument can be applied to $Q^\Gamma_\beta$ instead of $Q^\Sigma_\beta$,
		which gives
		\begin{equation}
			\label{eq4}
			\min\Big\{\Lambda_1(\Tilde{Q}_{\beta}^{\Gamma}),\Lambda_1(Q_\beta^{\Gamma_0})\Big\}-C\le
			\Lambda_1(Q_{\beta}^{\Gamma})\le \Lambda_1(\Tilde{Q}^\Gamma_\beta),\quad \beta\in\RR,
		\end{equation}
		where $\Gamma_0$ is a $C^4$-smooth curve coinciding with $\Gamma$ in $\RR^2\setminus B_{r}(0)$
		and $\Tilde{Q}_{\beta}^{\Gamma}$ is the self-adjoint operator in $L^2\big(B_{2r}(0)\big)$ generated by 
		by the sesquilinear form $\Tilde{q}_{\beta}^{\Gamma}$ given by
		\[
		\Tilde{Q}_{\beta}^{\Gamma}(g,g) = \int_{B_{2r}(0)} |\nabla g|^2\ddd x + \beta \int_{B_{2r}(0)\cap\Gamma} |g|^2\dd\sigma
		\]
		on the domain $\dom \Tilde{q}_{\beta}^{\Gamma}=H^1_0\big(B_{2r}(0)\big)$. 
		We have noted above for any $\beta<0$ we have $\Lambda_1(Q_{\beta}^{\Gamma})=-b\beta^2$ (while $b>\frac{1}{4}$),
		and by \cite[Section 5]{EY2} for $\beta\to-\infty$ one has
		\[
		\Lambda_1(Q_\beta^{\Gamma_0})=\mu_1(Q_\beta^{\Gamma_0})=-\frac{1}{4}\beta^2+O(1).
		\]
		In particular, for $\beta\to-\infty$ one has $\Lambda_1(Q_\beta^{\Gamma_0})-C> \Lambda_1(Q_{\beta}^{\Gamma})$,
		and \eqref{eq4} implies
		\[
		\Lambda_1(\Tilde{Q}_{\beta}^{\Gamma})-C\le \Lambda_1(Q_{\beta}^{\Gamma})\le \Lambda_1(\Tilde{Q}^\Gamma_\beta),\quad \beta\to-\infty,
		\]
		in particular,
		\[
		\Lambda_1(\Tilde{Q}_{\beta}^{\Gamma})=\Lambda_1(Q_{\beta}^{\Gamma})+O(1)\equiv -b\beta^2+O(1) \text{ for }\beta\to -\infty.
		\]
		We further remark that due to the assumption (i) in \eqref{assump} we have $\Tilde{Q}_{\beta}^{\Gamma}=\Tilde{Q}_{\beta}^{\Sigma}$,
		and by \eqref{mnq} for $\Sigma:=\Sigma_0$ for $\beta\to-\infty$ we have
		\[
		\Lambda_1(Q^{\Sigma_0}_\beta)=\mu_1(Q^{\Sigma_0}_\beta)=-\frac{1}{4}\beta^2+O(1)>\Lambda_1(\Tilde{Q}_{\beta}^{\Sigma}),
		\]
		so \eqref{eq3} yields that for $\beta\to-\infty$ one has
		\[
		\Lambda_1(\Tilde{Q}_{\beta}^{\Sigma})-C\le
		\Lambda_1(Q_{\beta}^{\Sigma})\le \Lambda_1(\Tilde{Q}^\Sigma_\beta),
		\]
		in particular,
		\[
		\Lambda_1(Q_{\beta}^{\Sigma})=\Lambda_1(\Tilde{Q}^\Sigma_\beta)+O(1)\equiv \Lambda_1(\Tilde{Q}^\Gamma_\beta)+O(1)=-b\beta^2+O(1).
		\]
		The right-hand side is negative (i.e. lies below the bottom of the essential spectrum of $Q^\Sigma_\beta$), so
		$\Lambda_1(Q_{\beta}^{\Sigma})=\mu_1(Q_{\beta}^{\Sigma})$ for all sufficiently large negative $\beta$.
	\end{proof}
	
	By combining Theorem \ref{thm13} with Corollary \ref{corol01} we arrive at the following observation:
	
	\begin{corollary}\label{corol02}
		If $\Sigma$ satisfies \eqref{assump}, then for the respective operator $H_\alpha$ one has
		\[
		\lambda_1(H_\alpha)=-\dfrac{1}{b_\theta \alpha^2} +o\Big( \dfrac{1}{\alpha^2}\Big),\quad \alpha\to 0^-.
		\]	
	\end{corollary}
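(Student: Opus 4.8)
The plan is to obtain the statement by feeding the conclusion of Theorem~\ref{thm13} directly into Corollary~\ref{corol01}; the two results were set up precisely so that this composition works. First I would invoke Theorem~\ref{thm13}: its hypothesis is exactly the geometric condition \eqref{assump} assumed on $\Sigma$, and it produces
\[
\mu_1(Q^\Sigma_\beta)=-b_\theta\beta^2+O(1) \text{ for }\beta\to-\infty .
\]
In particular $\Lambda_1(Q^\Sigma_\beta)<0=\inf\spece Q^\Sigma_\beta$ for all sufficiently large negative $\beta$, so $Q_\beta\equiv Q^\Sigma_\beta$ indeed has at least one negative eigenvalue in this regime and $\mu_1(Q_\beta)$ is well defined there.

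Next I would note that an $O(1)$ remainder is a fortiori $o(\beta^2)$ as $\beta\to-\infty$, so the displayed asymptotics can be rewritten in the form
\[
\mu_1(Q_\beta)=-b_\theta\beta^2+o(\beta^2) \text{ for }\beta\to-\infty ,
\]
with $b_\theta>\tfrac14>0$ (as recalled just before Theorem~\ref{thm13}). This is exactly the hypothesis of Corollary~\ref{corol01} with $n=1$ and $b=b_\theta$, and applying that corollary yields at once
\[
\lambda_1(H_\alpha)=-\dfrac{1}{b_\theta\alpha^2}+o\Big(\dfrac{1}{\alpha^2}\Big) \text{ for }\alpha\to0^- ,
\]
which is the claim.

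I do not anticipate any real obstacle at this stage: all the analytic substance has already been invested upstream. Theorem~\ref{thm13} contains the hard part --- the localization via the cut-offs $\chi_1,\chi_2$, the identification $\Tilde{Q}^\Sigma_\beta=\Tilde{Q}^\Gamma_\beta$ of the Dirichlet-truncated operators on $B_{2r}(0)$, and the comparison with the smoothed curves $\Sigma_0$, $\Gamma_0$ through the known asymptotics \eqref{mnq} --- while Corollary~\ref{corol01} (resting on Proposition~\ref{propas1} and the monotonicity of $\lambda\mapsto\lambda\mu_n\big(S(\lambda)\big)$ from Remark~\ref{rmk9}) packages the passage from the $\delta$-interaction picture to the oblique-transmission picture. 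The only point to verify carefully is the bookkeeping of error terms, namely the inclusion $O(1)\subset o(\beta^2)$ used above, which is immediate.
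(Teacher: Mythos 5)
Your proposal is correct and is essentially identical to the paper's argument: the corollary is obtained precisely by feeding the $O(1)$ (hence $o(\beta^2)$) asymptotics of Theorem~\ref{thm13} into Corollary~\ref{corol01} with $n=1$ and $b=b_\theta$. Your extra remark that $\mu_1(Q^\Sigma_\beta)$ is well defined for sufficiently large negative $\beta$ is a harmless (and correct) bit of bookkeeping that the paper leaves implicit.
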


	\begin{theorem}
		For any $B\in(1,4)$ there exists a simple closed Lipschitz curve $\Sigma$ such that for the associated operator $H_\alpha$
		there holds
		\[
		\lambda_1(H_\alpha)=-\dfrac{B}{\alpha^2}+o\Big(\dfrac{1}{\alpha^2}\Big) \text{ for } \alpha\to 0^-.
		\]
	\end{theorem}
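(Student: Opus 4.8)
The plan is to realize $\Sigma$ as a curve of the type covered by Corollary~\ref{corol02}, with the opening angle tuned to the prescribed $B$. Indeed, if $\Sigma$ satisfies \eqref{assump} for some $\theta\in(0,\tfrac{\pi}{2})$, then Corollary~\ref{corol02} gives $\lambda_1(H_\alpha)=-\tfrac{1}{b_\theta}\,\tfrac{1}{\alpha^2}+o(\tfrac{1}{\alpha^2})$ as $\alpha\to0^-$, so the theorem reduces to two points: (i) for every $c\in(\tfrac14,1)$ there is $\theta\in(0,\tfrac\pi2)$ with $b_\theta=c$; and (ii) for such a $\theta$ a simple closed Lipschitz curve satisfying \eqref{assump} actually exists. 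Granting these, for the given $B\in(1,4)$ one has $\tfrac1B\in(\tfrac14,1)$, one picks $\theta_B$ with $b_{\theta_B}=\tfrac1B$, builds the associated $\Sigma$, and reads off $\lambda_1(H_\alpha)=-\tfrac{B}{\alpha^2}+o(\tfrac1{\alpha^2})$ from Corollary~\ref{corol02}.

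The geometric step (ii) is routine. Fix $r>0$, keep the two segments $\Gamma_\theta\cap\overline{B_{2r}(0)}$, prolong each of them a little past $\partial B_{2r}(0)$ as a straight segment (so that near the two junction points on $\partial B_{2r}(0)$ the curve is locally a straight line and the required $C^4$ matching is automatic there), and then join the two outer endpoints by a $C^4$ arc contained in $\RR^2\setminus\overline{B_{2r}(0)}$ and avoiding the wedge, so that the union is a Jordan curve. The resulting $\Sigma$ is a simple closed Lipschitz curve which coincides with $\Gamma_\theta$ in $B_{2r}(0)$ and is $C^4$ away from the origin; i.e.\ it satisfies \eqref{assump}.

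Step (i) is the substantive part, and the main obstacle. I would base it on two properties of the $\delta$-interaction on the broken line $\Gamma_\theta$, drawn from the leaky-quantum-graph literature (the sources \cite{EY,KP} already used for the existence of $b_\theta$, together with the review \cite{leaky1}): (1) the map $(0,\tfrac\pi2)\ni\theta\mapsto b_\theta$ is continuous (and, as is known, strictly decreasing); and (2) $b_\theta\to\tfrac14$ as $\theta\to\tfrac\pi2{}^-$, while $b_\theta\to1$ as $\theta\to0^+$. Continuity in (1) comes from a min--max comparison: $\Gamma_\theta$ and $\Gamma_{\theta'}$ are exchanged by a global bi-Lipschitz diffeomorphism of $\RR^2$ (acting, say, only on the angular variable) whose distortion tends to $1$ as $\theta'\to\theta$, so that the Rayleigh quotients of $q^{\Gamma_{\theta'}}_{-1}$ and $q^{\Gamma_\theta}_{-1}$ are uniformly comparable with constant $\to1$, whence $b_{\theta'}\to b_\theta$. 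For (2): when $\theta\to0^+$ the two half-lines merge onto a single ray, the ground state of $Q^{\Gamma_\theta}_{-1}$ concentrates near the vertex where it feels the doubled coupling $2\beta$, and its energy approaches the transverse bottom $-\big(\tfrac{|2\beta|}{2}\big)^2=-\beta^2$, i.e.\ $b_\theta\to1$; when $\theta\to\tfrac\pi2{}^-$ the broken line straightens and the bend binds only weakly, so $b_\theta\to\tfrac14$ (in view of the free lower bound $b_\theta>\tfrac14$). The genuinely delicate item is the quantitative form of the limit $b_\theta\to\tfrac14$, which I would either quote from the broken-line literature or prove via an integration-by-parts lower bound $q^{\Gamma_\theta}_{-1}(f,f)\ge-(\tfrac14+\varepsilon(\theta))\,\|f\|^2_{L^2(\RR^2)}$ with $\varepsilon(\theta)\to0$ as $\theta\to\tfrac\pi2{}^-$. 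Once (1) and (2) are established, the intermediate value theorem supplies, for each $B\in(1,4)$, an angle $\theta_B\in(0,\tfrac\pi2)$ with $b_{\theta_B}=\tfrac1B$, and combining with (ii) and Corollary~\ref{corol02} completes the proof.
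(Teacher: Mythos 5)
Your proposal is correct and follows essentially the same route as the paper: reduce to Corollary~\ref{corol02}, then use continuity of $\theta\mapsto b_\theta$ together with the endpoint limits $b_\theta\to 1$ ($\theta\to 0^+$) and $b_\theta\to\tfrac14$ ($\theta\to\tfrac{\pi}{2}^-$) and the intermediate value theorem to find $\theta$ with $b_\theta=\tfrac1B$. The paper simply obtains these inputs by citation (continuity/monotonicity from \cite{en1}, the limits from \cite{dr} and \cite{ek}), which is one of the options you allow for, so your sketched heuristics and the explicit construction of a curve satisfying \eqref{assump} are harmless elaborations rather than a different argument.
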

	
	\begin{proof}
		By \cite[Section~5]{en1} the function $(0,\frac{\pi}{2})\ni\theta\mapsto (-b_\theta)$ is continuous and monotonically decreasing, and
		by \cite[Theorem 1.11]{dr} and \cite[Theorem 2.2]{ek} one has
		\[
		\lim_{\theta\to 0^+} (-b_\theta)=-1,\quad
		\lim_{\theta\to \frac{\pi}{2}^-} (-b_\theta)=-\frac{1}{4}.
		\]
		Therefore, for any $B\in(1,4)$ there exists $\theta\in(0,\frac{\pi}{2})$ with $b_\theta=\frac{1}{B}$,
		and the claim follows by Corollary \ref{corol02}.	
	\end{proof}
	
	\section*{Acknowledgments}Badreddine Benhellal and Konstantin Pankrashkin were supported by the Deutsche For\-schungsgemeinschaft (German Research Foundation, DFG) -- 491606144.
	Miguel Camarasa was supported by the Ministry of Science and Innovation of Spain (BCAM Severo Ochoa accreditation CEX2021-001142-S/MICIN/ AEI/10.13039/ 501100011033) and by the Basque Government through the BERC 2022--2025 program. 
	
	This work was prepared during a visit of Miguel Camarasa to the Carl von Ossietzky Universit\"at Oldenburg in January--April 2024, and he thanks the Institut f\"ur Mathematik for the warm hospitality.


\begin{thebibliography}{999}
		
\bibitem{bel} J.~Behrndt, P.~Exner, V.~Lotoreichik, {\em
	Schr\"odinger operators with $\delta$ and $\delta'$ interactions on Lipschitz surfaces and chromatic numbers of associated partitions.}
Rev. Math. Phys. {\bf 26} (2014) 1450015.

		\bibitem{bgm} J. Behrndt, F. Gesztesy, M. Mitrea, \emph{Sharp boundary trace theory and Schr\"odinger operators on bounded Lipschitz domains.}
		Mem. Amer. Math. Soc. (in press). Preprint arXiv:2209.09230.
		
		\bibitem{bhs}J. Behrndt, M. Holzmann, G. Stenzel, {\em Schr\"odinger operators with oblique transmission conditions in $\rr^2$}. Comm. Math. Phys. {\bf 401} (2023) 3149--3167.

		
\bibitem{bpz} B. Benhellal, K. Pankrashkin, M. Zreik, {\em On the self-adjointness of two-dimensional relativistic shell interactions}.
J.~Operator Theory (in press). Preprint arXiv:2307.12772.
		
		\bibitem{BEKS}  J. F. Brasche, P. Exner, Yu. A. Kuperin, P.~{\v{S}}eba, {\em Schr\"odinger operators with singular interactions}. J. Math. Anal. Appl. {\bf 184} (1994) 112--139.
		
		\bibitem{cost2} M. Costabel, \emph{ Boundary integral operators on Lipschitz domains: elementary results.} SIAM J. Math. Anal. {\bf 19} (1988) 613--626.


		\bibitem{dr}
V. Duch\^ene, N. Raymond, {\em Spectral asymptotics of a broken $\delta$-interaction.}
J. Phys. A {\bf 47} (2014) 155203.

		
		
		\bibitem{leaky1} 
		P. Exner, {\em Leaky quantum graphs: a review}.  P. Exner, J. P. Keating, P. Kuchment, T. Sunada, A. Teplyaev (eds.),
		{\em Analysis on graphs and its applications.} Proc. Symp. Pure Math., vol.~77, Amer. Math. Soc., Providence, RI, 2008, pp.~523--564.
		%


\bibitem{EI} P. Exner, T. Ichinose, {\em Geometrically induced spectrum in curved leaky wires}. J. Phys. A {\bf 34} (2001) 1439--1450.
		
		\bibitem{ek}
		P. Exner, S. Kondej, {\em Gap asymptotics in a weakly bent leaky quantum wire.}
		J. Phys. A {\bf 48} (2015) 495301.
		
		\bibitem{en1} P. Exner, K. N\v{e}mcov\'a, {\em Leaky quantum qraphs: Approximations by point interaction Hamiltonians.}
		J. Phys. A {\bf 36} (2003) 10173--10193.
		
		
		\bibitem{EY} P. Exner, K. Yoshitomi, {\em Asymptotics of eigenvalues of the Schr\"odinger operator with a strong $\delta$-interaction on a loop}. J. Geom. Phys. {\bf 41} (2002) 344--358.
		
		\bibitem{EY2} P. Exner, K. Yoshitomi, {\em Band gap of the Schr\"odinger operator with a strong $\delta$-interaction on a periodic curve.}
		Ann. Henri Poincar\'e {\bf 2} (2001) 1139--1158.
		
		\bibitem{GS} J. Galkowski, H. Smith, {\em Restriction bounds for the free resolvent and resonances in lossy scattering}. Int. Math. Res. Not. {\bf 16} (2015) 7473--7509.
		
		\bibitem{GM} F. Gesztesy, M. Mitrea, {\em Generalized Robin boundary conditions, Robin-to-Dirichlet maps, and Krein-type resolvent formulas for Schr\"odinger operators on bounded Lipschitz domains}. D.~Mitrea, M.~Mitrea (eds.), \emph{Perspectives in partial differential equations, harmonic analysis and applications: A volume in nonor of Vladimir G. Maz'ya's 70th birthday.}
		Proc. Sympos. Pure Math., vol.~79, Amer. Math. Soc., Providence, RI, 2008, pp.~105--173.
		
		\bibitem{Gris} P. Grisvard, {\em Elliptic problems in nonsmooth domains.} Monographs and Studies in
		Mathematics, vol. 24, Pitman, Boston, MA, 1985.
		
		\bibitem{Mc} W. McLean, \emph{Strongly elliptic systems and boundary integral equations}. Cambridge University Press, 2000.
		
		\bibitem{KP} K. Pankrashkin, {\em  Variational proof of the existence of eigenvalues for star graphs}. J. Dittrich, H. Kova\v{r}\'\i{k}, A. Laptev (eds.), {\em Functional analysis and operator theory for quantum physics. Pavel Exner anniversary volume} (EMS Series of Congress Reports, vol. 12, 2017), pp.~447--458.
		
		\bibitem{RS} M. Reed, B. Simon, {\em Methods of modern mathematical physics. IV. Analysis of operators}. Academic Press, New York, London, 1978.
		
	\end{thebibliography}
\end{document}